\newtheorem{theorem}{Theorem}[section]
\newtheorem{lemma}[theorem]{Lemma}
\newtheorem{proposition}[theorem]{Proposition}
\theoremstyle{definition}
\newtheorem{definition}[theorem]{Definition}
\theoremstyle{remark}
\newtheorem{remark}[theorem]{Remark}
\numberwithin{equation}{section}
\begin{document}
\setcounter{page}{1}

\title[Hilbert
transforms along curves]{Vector-valued Hilbert transforms along curves}

\author[G. Hong, H. Liu]{Guixiang Hong$^1$ and Honghai Liu$^2$$^{*}$}

\address{$^{1}$School of Mathematics and Statistics,
Wuhan University,
Wuhan 430072, China
and
 Instituto de Ciencias Matem\'aticas,
CSIC-UAM-UC3M-UCM,
Consejo Superior de Investigaciones Cient\'ificas,
C/ Nicol\'{a}s Cabrera 13-15. 28049, Madrid, Spain.}
\email{\textcolor[rgb]{0.00,0.00,0.84}{guixiang.hong@icmat.es}}

\address{$^{2}$ School of Mathematics and Information Science, Henan Polytechnic University, Jiaozuo, Henan 454003, China.}
\email{\textcolor[rgb]{0.00,0.00,0.84}{hhliu@hpu.edu.cn}}


\subjclass[2010]{Primary 43A32; Secondary 46B99.}

\keywords{Hilbert transforms along curves, Weighted
H\"{o}rmander condition, UMD spaces, Completely bounded, Analytic interpolation.}

\date{Received: xxxxxx; Revised: yyyyyy; Accepted: zzzzzz.
\newline \indent $^{*}$ Corresponding author}

\begin{abstract}
In this paper, we show that Hilbert transforms along some curves are
bounded on $L^p({\mathbb R}^n;X)$ for some $1<p<\infty$ and some UMD
spaces $X$. In particular, we prove that Hilbert transforms along
some curves are completely $L^p$-bounded in the terminology from
operator space theory. Moreover, we obtain the
$L^p(\mathbb{R}^n;X)$-boundedness of anisotropic singular integrals
by using the "method of rotations" of Calder\'{o}n-Zygmund.
 All these results extend already existing related ones.
\end{abstract} \maketitle

\section{Introduction}\label{sect1}
The question of whether the mapping properties of singular integral operators could be extended to the Lebesgue-B\^ohner spaces $L^p(\mathbb{R}^n;X)$ ($1<p<\infty$) of vector-valued functions was taken up by several authors in the 60's. In \cite{BCP62}, Benedek, Calder\'on and Panzone observed that the boundedness on $L^{p_0}(\mathbb{R}^n;X)$ for one $1<p_0<\infty$ of a singular integral operator, together with H\"ormander's condition, implies its
boundedness on $L^p(\mathbb{R}^n;X)$ for all $1<p<\infty$. However, to actually get the $L^{p_0}(\mathbb{R}^n;X)$-boundedness (something that was immediate for $p_0=2$ in the scalar-valued), turned out to be a significantly difficult task except in the case $X=L^{p_0}(\Omega)$ for some measure space $\Omega$.

The first progress made in this direction is Burkholder's extension
\cite{Bu83}  of Riesz's classical theorem on the $L^p$-boundedness
of the Hilbert transform, where it was shown that if the underlying
Banach space $X$ satisfies the so called UMD-property, then the
Hilbert transform is bounded on $L^p(\mathbb{R};X)$ for any
$1<p<\infty$. Moreover, the UMD-property was shown by Bourgain
\cite{Bou86} to be necessary for the boundedness of the Hilbert
transform. It is well-known that the Hilbert transform is a
prototype of singular integral operators and Fourier multipliers,
its boundedness motivates McConnell's \cite{Mcc84} and Zimmermann's
\cite{Z89} results on vector-valued Marcinkiewicz-Mihlin
multipliers, and Hyt\"{o}nen and Weis's \cite{HW06}
results on vector-valued singular convolution integrals.

Particularly, if $X$  equals $S_p$--the Schatten class, the
$L^p(\mathbb R^n;S_p)$-boundedness is called complete
$L^p$-boundedness in the light of noncommutative harmonic analysis.
In this setting, the complete $L^2$-boundedness is immediately
available because $S_2$ is a Hilbert space, and the Fourier transform
(or almost orthogonality principle) can be adapted. In order to
obtain the complete $L^p$-boundedness, so far as we know in the
noncommutative harmonic analysis, there are only two ways. One way
is to establish firstly the weak type $(1,1)$ estimate, and then to
use interpolation and the duality argument. In this way, the convolution
kernel need to satisfy the Lipschitz regularity in order to conduct
the pseudo-localization principle as done in \cite{Par09} (see also
\cite{HSMP14} for related results). The other way is to get
$(L^{\infty},BMO)$ (the noncommutative BMO space) estimate, then to
use interpolation and the duality argument. In this case, the kernel is
required to satisfy the H\"ormander's condition as done in
\cite{Mei07} and \cite{JMP}. However, to get the complete
$L^p$-boundedness is not a trivial work when the kernel does not
satisfy the Lipschitz regularity and the H\"ormander condition, see
e.g. \cite{HoPa} for more information.

The purpose of our project is to extend the vector-valued
singular integrals theory to more general setting. We consider
vector-valued singular Radon transforms, which are given by the
following principal-valued integral
\begin{equation}\nonumber
\mathscr Tf(x)={\rm p.v.}\int_{\mathbb R^k}f(x-\Gamma(t))K(t)dt,\ \
f\in C_0^\infty({\mathbb R}^n)\otimes X,
\end{equation}
where $X$ is a Banach space, $K$ is a Calder\'{o}n-Zygmund kernel in
$\mathbb R^k$ and $\Gamma:\mathbb R^k\rightarrow{\mathbb R}^n$ is a
surface in ${\mathbb R}^n$ with $\Gamma(0)=0$, $n\ge2$. Precisely,
we are interested in the boundedness of $\mathscr T$ on $L^p(\mathbb
R^n;X)$, where $p\in(1,\infty)$ and $X$ is some Banach space.
Obviously, $\mathscr T$ are classical vector-valued singular
convolution integrals if $k=n$ and $\Gamma(t)=(t_1,t_2,\cdots,t_n)$, and
related results have been introduced in the previous paragraphs. On
the other hand, if $X=\mathbb R$, $\mathscr T$ are classical
singular integrals associated to surfaces, which have been
well-studied by Stein, Nagel, Wainger, Christ and so on, see
\cite{SW78} for a survey of results through 1978 and \cite{CNSW99}
through 1999.
\par
In the present paper, we start with the investigation of Hilbert transforms along curves in the hope of providing the insight and inspiration for subsequent development of this subject, as the role played by the classical Hilbert transform in the classical vector-valued Calder\'{o}n-Zygmund theory. Vector-valued Hilbert transforms along curves are defined by
\begin{equation}\nonumber
\mathscr{H}f(x)={\rm p.v.}\int_{\mathbb
R}f\big(x-\Gamma(t)\big)\frac{dt}t,\ \ f\in C_0^\infty({\mathbb
R}^n)\otimes X.
\end{equation}
In the scalar-valued case, the $L^2$-boundedness goes back the work
\cite{Fab66} of Fabes who proved it with
$\Gamma(t)=(t^{\alpha},t^{\beta})$ using complex integration. Then
Stein and Wainger \cite{StWa70} obtained the $L^2$-boundedness for
all homogeneous curves by using Van der Corput's estimates for
trigonometric integrals. The first breakthrough was the proof of the
$L^p$-boundedness in the papers of Nagel, Rivi\`{e}re and Wainger
\cite{NRW76} as well as the paper of Nagel and Wainger
\cite{NW76} using Stein's complex interpolation. Since then, many
related results have been obtained, see Stein and Wainger's survey
paper \cite{SW78} for the curves having some curvature at the
origin, the paper of Carlsson {\it et al} \cite{CCCDRVWW86} and the
references therein for the flat curves in $\mathbb{R}^2$. However, all results about vector-valued singular
integrals mentioned previously can not be directly applied to
Hilbert transforms along curves on $L^p(\mathbb R^n;X)$, because
they are no longer Calder\'on-Zygmund operators. Therefore this
study is a move beyond the vector-valued Calder\'on-Zygmund theory.

In the present paper, we extend Nagel, Rivi\`{e}re and Wainger as
well as Nagel and Wainger's results mentioned above to the
vector-valued setting by combining their original arguments and some
idea developed recently by Hyt\"onen and Weis \cite{HW08} in the
vector-valued Calder\'on-Zygmund theory. To state our results, we
need to recall and introduce some notations. Denote by $\epsilon_j$,
$j\in\mathbb Z$, the Rademacher system of independent random
variables on a probability space $(\Omega,\Sigma,\mathbf P)$
verifying $\mathbf P(\epsilon_j=1)=\mathbf P(\epsilon_j=-1)=1/2$.
Let $\mathbb E=\int(\cdot)d\mathbf P$ be the corresponding
expectation. The main Banach space geometry property of $X$ we are
concerned in this paper is the UMD property (see e.g. \cite{Bu83}),
i.e. the following inequality holds:
$$
\big(\mathbb E\big\|\sum_{k=1}^N\epsilon_kd_k\big\|_{X}^2\big)^{1/2}\le C\big(\mathbb E\big\|\sum_{k=1}^Nd_k\big\|_{X}^2\big)^{1/2}
$$
for all $N\in\mathbb N$, all fixed signs $\epsilon_k\in\{-1,1\}$,
all $X$-valued martingale differences $(d_k)_{k\geq0}$. The
following notation is very useful for formulating the main results
in this paper.
\begin{definition}\label{subclass of umd}
Let  $(a,b)\subseteq(0,1)$. We define $\mathcal I_{(a,b)}$ to be the set consisting of UMD spaces with its element $X$ having the form $X=[H,Y]_\theta$ such that  $\theta\in(a,b)$, $H$ is a Hilbert space and $Y$ is another UMD space.
$\mathcal I_{(0,1)}$ is denoted by $\mathcal{I}$ for simplicity.
\end{definition}
\begin{remark}
{\rm (i)}. It is easy to check that all the noncommutative $L_p$ spaces (containing commutative $L^p$ spaces) with $1<p<\infty$ belong to the class $\mathcal
I_{(|1-\frac2p|,1)}$. From the reflexivity of UMD space, in general we have $X\in\mathcal I_{(a,b)}$ if
and only if $X^\ast\in\mathcal I_{(a,b)}$. Furthermore, if
$(a,b)\subseteq(c,d)\subseteq(0,1)$, then $\mathcal
I_{(a,b)}\subseteq\mathcal I_{(c,d)}$.

{\rm (ii)}. In \cite{R85}, Rubio de Francia proved that for any UMD
lattice $X$ there exist $\theta\in(0,1)$, a Hilbert space $H$ and
another UMD lattice $Y$ such that $X=[H,Y]_\theta$. That means every
UMD lattice $X$ belongs to $\mathcal I$. In the same paper, the
author also ask the open question ``Is every $B\in UMD$ intermediate
between a 'worse' $B_0$ and a Hilbert spaces ?"  which in our
language means ``If $\mathcal I$ contains all UMD spaces?".
\end{remark}

\par
The first result is on the Hilbert transform along the homogeneous curves
$\Gamma(t)=(|t|^{\alpha_1}sgn t,|t|^{\alpha_2}sgn
t,\cdots,|t|^{\alpha_n}sgn t)$ with each $\alpha_i>0$.

\begin{theorem}\label{main result}
Let $X\in\mathcal I$ and $1<p<\infty$. Then there exists an absolute
constant $C_p$ such that
\begin{equation}\nonumber
\|\mathscr Hf\|_{L^p(X)}\le C_p
\|f\|_{L^p(X)},\ \ f\in{L^p(\mathbb R^n;X)} .
\end{equation}
\end{theorem}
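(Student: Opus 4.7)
The hypothesis $X=[H,Y]_\theta$ invites a complex interpolation argument in the Banach space variable, exploiting Calder\'on's identity
$$
[L^p(\mathbb{R}^n;H),\,L^p(\mathbb{R}^n;Y)]_\theta=L^p(\mathbb{R}^n;X).
$$
The $H$-endpoint is comparatively soft: since $H$ is a Hilbert space and $\mathscr{H}$ is a scalar convolution operator, the scalar $L^p$-theorem of Nagel--Rivi\`ere--Wainger \cite{NRW76} and Nagel--Wainger \cite{NW76} --- proved by dyadic decomposition, the anisotropic scaling $\delta_s(x)=(s^{\alpha_1}x_1,\ldots,s^{\alpha_n}x_n)$, Van der Corput/Plancherel estimates for the dyadic multipliers, and a Littlewood--Paley reassembly --- transfers to $L^p(\mathbb{R}^n;H)$ via the classical Hilbert-valued square function extension. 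So the substantive task is the $Y$-endpoint.

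\textbf{The $Y$-endpoint via an analytic family.} Because $\mathscr{H}$ itself is not known to be bounded on $L^p(\mathbb{R}^n;Y)$ for a general UMD $Y$, direct bilinear interpolation will not do. Instead I would build a family $\{T_z\}$ holomorphic on the strip $0\le\mathrm{Re}(z)\le 1$ such that
$$
T_\theta=\mathscr{H},\qquad \sup_{y\in\mathbb{R}}\|T_{iy}\|_{L^p(\mathbb{R}^n;H)\to L^p(\mathbb{R}^n;H)}<\infty,
$$
while on $\mathrm{Re}(z)=1$ the kernel of $T_z$ is a $z$-dependent analytic smoothing of $t^{-1}$ (for example multiplication by $|t|^z$, or convolution with a Schwartz bump of scale $|t|$) arranged so that $T_{1+iy}$ becomes a Fourier multiplier whose symbol satisfies an anisotropic Marcinkiewicz--Mihlin condition adapted to $\delta_s$, with constants growing at most polynomially in $|y|$. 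The vector-valued multiplier theorems of McConnell \cite{Mcc84} and Zimmermann \cite{Z89} then deliver the $L^p(\mathbb{R}^n;Y)$-bound on the right boundary, and Stein's analytic interpolation yields Theorem \ref{main result}.

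\textbf{Main obstacle.} The hardest step will be engineering the analytic family so that the line $\mathrm{Re}(z)=1$ genuinely falls within the scope of a UMD-valued Mihlin-type theorem. The curve $\Gamma$ produces oscillation concentrated along a twisted subvariety in Fourier space, and excising just the right amount of singularity via $|t|^z$ is a delicate balance: too little smoothing and one still sees the non-Calder\'on--Zygmund obstruction that brought us to UMD in the first place, too much and the family no longer analytically connects to $\mathscr{H}$ at $z=\theta$. Here I expect the randomized / $R$-boundedness formalism of Hyt\"onen--Weis \cite{HW08} to be crucial: it provides a sharp multiplier theorem on UMD-valued $L^p$ whose hypotheses are precisely adapted to absorb the residual oscillation from $\Gamma$ after regularization, while the polynomial growth in $|\mathrm{Im}(z)|$ needed for the three-lines lemma can be tracked explicitly through the Rademacher norms that appear when the dyadic pieces $\mathscr{H}_j$ are reassembled.
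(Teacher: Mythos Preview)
Your global plan --- embed $\mathscr H$ in an analytic family, use Stein interpolation, and exploit $X=[H,Y]_\theta$ --- is exactly the paper's architecture, and your suggestion to insert $|t|^z$ into the kernel is part of the construction used there. But the $Y$-endpoint, which you correctly identify as the crux, is handled quite differently from what you propose, and your version has a genuine gap.

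The paper's analytic family is not just $m_z(\xi)=\mathrm{p.v.}\int e^{-2\pi i\xi\cdot\Gamma(t)}|t|^z\,dt/t$; it is $\{\rho(\xi)\}^z m_z(\xi)$, with the extra anisotropic Riesz factor. This is essential: it converts the operator into an honest $\mathbb R^n$-convolution $f\mapsto K_z\ast f$ with $K_z(x)=\int h_z(x-\Gamma(t))|t|^z\,dt/t$, where $\widehat{h_z}=\rho^z$. The UMD estimate is then taken on the line $\mathrm{Re}(z)<0$ (not $\mathrm{Re}(z)=1$), where $K_z$ is homogeneous of degree $-\Delta$ and satisfies a \emph{weighted H\"ormander condition}
\[
\int_{\rho(x)\ge C_0\rho(y)}|K_z(x-y)-K_z(x)|\,\log^n(e+\rho(x))\,dx\le C(z)\log^n(e+\rho(y)).
\]
From this the $L^p(Y)$ bound follows by the Hyt\"onen--Weis kernel machinery: an anisotropic Littlewood--Paley decomposition, the estimate $\int|\phi_0\ast K_z|\log^n(e+\rho)\,dx<\infty$, and Bourgain's shift lemma. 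No Mihlin/Marcinkiewicz verification on the symbol of $\mathscr H_z$ is attempted or needed.

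Your route --- arranging that $T_{1+iy}$ satisfies an anisotropic Marcinkiewicz--Mihlin condition and invoking a UMD multiplier theorem --- runs into two concrete obstacles. First, $m_z$ is anisotropically homogeneous of degree $-z$, so neither $m_z$ nor $\rho^z m_z$ is smooth on the unit $\rho$-sphere; each $\xi$-derivative brings down a factor $\Gamma_j(t)\sim|t|^{\alpha_j}$, so controlling $\xi^\beta D^\beta m_z$ uniformly is exactly the difficulty one is trying to avoid. Second, in dimension $n>1$ the Marcinkiewicz--Lizorkin condition is sufficient for $L^p(Y)$-boundedness only when $Y$ has property~$(\alpha)$ --- indeed \cite{HW08}, which you cite, proves the \emph{necessity} of $(\alpha)$ for that theorem --- whereas Theorem~\ref{main result} is stated for an arbitrary UMD interpolant $Y$. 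The kernel/H\"ormander route sidesteps both issues.
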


This is a vector-valued version of Theorem 1 of Nagel, Rivi\`{e}re
and Wainger in \cite{NRW76}. Following the previous remark, Theorem
\ref{main result} implies the complete boundedness of Hilbert
transforms along this kind of curves which is of independent
interest in the operator space theory. This result also partially
generalize the previous result by Rubio de Francia, Ruiz and Torra
\cite{RRT86} where they obtained Theorem \ref{main result} in the
case $X=\ell^q$ with $1<q<\infty$. In \cite{RRT86}, the authors used
indirectly Benedek, Calder\'on and Panzone's strategy mentioned
previously. While the proof of Theorem \ref{main result} is
motivated by the recent development in the vector-valued
Calder\'on-Zygmund theory \cite{HW06}, see Section 2 for related details.

Let $\delta_t$ be a one parameter group of dilations and $\mathbf
e,\mathbf f$ be vectors in $\mathbb R^n$.
 A curve $\Gamma(t)$ is called two-sided homogeneous
if the following two conditions hold:
\begin{equation}\label{ts}
\Gamma(t)=\left\{
\begin{array}{ccc}
\delta_t\ \mathbf e,& t>0,\\
\delta_{-t}\ \mathbf f,& t<0,\\
0,& t=0;
\end{array}\right.
\end{equation}
\begin{equation}\nonumber
\{\xi|\xi\cdot\Gamma(t)\equiv0,t>0\}=\{\xi|\xi\cdot\Gamma(t)\equiv0,t<0\}.
\end{equation}
The curve  $\Gamma(t)=(|t|^{\alpha_1}sgn t,|t|^{\alpha_2}sgn
t,\cdots,|t|^{\alpha_n}sgn t)$ is a model with
$\delta_tx=(t^{\alpha_1}x_1,t^{\alpha_2}x_2,\cdots,t^{\alpha_n}x_n)$,
$\mathbf e=\mathbf 1$ and $\mathbf f=-\mathbf 1$. We will see that
the same argument for this particular curve works for all the curves
with the same dilation but $\mathbf e=-\mathbf f$. Generalization of
Theorem \ref{main result} to all two-sided homogeneous curves in
turn motivates  us to consider the vector-valued Calder\'on-Zygmund
theory associated to one parameter group of dilations, which is a
project under  progress.

As an application, Theorem \ref{main result} is used to deal
with vector-valued anisotropic singular integrals with homogeneous
kernel by Calder\'on-Zygmund's rotation method. This work improves
Hyt\"{o}nen's Theorem 5.2 in \cite{H07} in some sense, see Section 3
for more details.

In the next result, we deal with certain convex curves in
$\mathbb{R}^2$ with the form $\Gamma(t)=\big(t,\gamma(t)\big)$,
$\gamma(t)$ is some convex function for $t\ge0$.
\begin{theorem}\label{main result 2}
Let $X$ be an UMD lattice belonging to the class
$I_{(0,\frac15)}$, $\gamma(t)$ be a continuous odd function, twice
continuously differentiable, increasing and convex for $t\ge0$.
 Suppose also that $\gamma''$ is monotone for $t>0$ and
there exists $C>0$ so that $\gamma'(t)\le Ct\gamma''(t)$ for $t>0$.
Then for $\frac53<p<\frac52$, there exists an absolute constant
$C_p$ such that
\begin{equation}\nonumber
\|\mathscr Hf\|_{L^p(X)}\le C_p
\|f\|_{L^p(X)},\ \ f\in{L^p(\mathbb R^n;X)} .
\end{equation}
\end{theorem}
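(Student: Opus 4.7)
The plan is to exploit the interpolation structure $X=[H,Y]_{\theta}$ with $\theta\in(0,\tfrac15)$ guaranteed by $X\in\mathcal{I}_{(0,1/5)}$, and reduce the theorem to two endpoint estimates via complex interpolation of vector-valued $L^p$ spaces: we use
$$[L^{p_1}(\mathbb{R}^2;H),\,L^{p_0}(\mathbb{R}^2;Y)]_{\theta}=L^p(\mathbb{R}^2;[H,Y]_{\theta})=L^p(\mathbb{R}^2;X)$$
whenever $1/p=(1-\theta)/p_1+\theta/p_0$. Thus the task splits into proving $L^{p_1}(\mathbb{R}^2;H)$-boundedness on the Hilbert side and $L^{p_0}(\mathbb{R}^2;Y)$-boundedness on the UMD-lattice side, and then adjusting $p_1,p_0,\theta$ so that the resulting exponent $p$ sweeps the interval $(\tfrac53,\tfrac52)$ as $\theta$ ranges over $(0,\tfrac15)$.

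\emph{Hilbert endpoint.} For $H$ a Hilbert space, $L^2(\mathbb{R}^2;H)$ is itself a Hilbert space, so Plancherel reduces the $L^2(H)$-bound to a uniform estimate on the Fourier multiplier
$$ m(\xi,\eta)=\mathrm{p.v.}\int_{\mathbb R} e^{-i(\xi t+\eta\gamma(t))}\,\frac{dt}{t}. $$
This is the classical oscillatory-integral analysis of Carlsson et al.~\cite{CCCDRVWW86}: the monotonicity of $\gamma''$ and the bound $\gamma'(t)\le Ct\gamma''(t)$ allow van der Corput estimates on each dyadic piece $\{2^j\le|t|<2^{j+1}\}$ of the natural decomposition $\mathscr{H}=\sum_j H_j$, and the scalar Littlewood--Paley theory (which passes verbatim to $H$-valued functions) then delivers $L^{p_1}(H)$-boundedness for $1<p_1<\infty$.

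\emph{UMD-lattice endpoint.} For $Y$ a UMD lattice, one combines the dyadic decomposition $\mathscr{H}=\sum_j H_j$ with vector-valued maximal- and square-function estimates that are available on UMD lattices through the Fefferman--Stein inequalities of Rubio de Francia \cite{R85} and the corresponding lattice Littlewood--Paley theory. Equivalently, one can appeal to weighted $L^p(w)$-bounds for the scalar $\mathscr{H}$ with $w\in A_p$ (produced by the Carlsson et al.\ methods) and apply Rubio de Francia's extrapolation principle to lift them to $Y$-valued functions. Either route yields $L^{p_0}(Y)$-boundedness on a range of $p_0$ whose size, through the interpolation identity above, determines the admissible range of $p$ and $\theta$.

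The main obstacle lies in the UMD-lattice endpoint. The oscillatory-integral machinery of \cite{CCCDRVWW86} rests on Plancherel and is not directly available in UMD lattices outside the Hilbert case, so the sharp multiplier decay of each $H_j$ must be repackaged as pointwise or square-function domination by operators that survive the lattice-extrapolation framework. Quantifying how much of this decay is preserved under such a repackaging is the technical heart of the argument; it is what forces the hypothesis $\theta<\tfrac15$ and, upon interpolation with the Hilbert endpoint, precisely pins down the range $\tfrac53<p<\tfrac52$ claimed in the statement.
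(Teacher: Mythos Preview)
Your proposal has a genuine gap at the UMD-lattice endpoint, and the overall structure differs from the paper's in a way that matters.

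You propose ordinary complex interpolation of the \emph{single} operator $\mathscr{H}$ between $L^{p_1}(H)$ and $L^{p_0}(Y)$. But if you could establish $\|\mathscr{H}f\|_{L^{p_0}(Y)}\le C\|f\|_{L^{p_0}(Y)}$ for an arbitrary UMD lattice $Y$ on \emph{any} nontrivial range of $p_0$, the theorem would be immediate with $Y=X$ and no interpolation would be needed. The routes you suggest---Rubio de Francia extrapolation from $A_p$-weighted scalar bounds, or lattice Littlewood--Paley/Fefferman--Stein control of the dyadic pieces $H_j$---are not available here: weighted $L^p(w)$ inequalities for Hilbert transforms along the convex curves of the hypothesis are not known, and the almost-orthogonality that makes $\sum_j H_j$ converge in the scalar or Hilbert case relies on Plancherel-type cancellation that does not survive passage to a general UMD lattice via pointwise or square-function domination. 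Your last paragraph effectively concedes this, but does not supply a substitute argument; in particular, nothing in the proposal explains where the numbers $\tfrac15$ and $(\tfrac53,\tfrac52)$ come from.

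What the paper actually does is introduce an \emph{analytic family} $\mathscr{H}_z$, defined via the multipliers
\[
m_z(\xi,\eta)=\mathrm{p.v.}\int_{\mathbb R}e^{-2\pi i[\xi t+\eta\gamma(t)]}\bigl[1+\eta^2\gamma^2(t)\bigr]^z\,\frac{dt}{t},
\]
with $\mathscr{H}_0=\mathscr{H}$, and applies Stein's interpolation of analytic families rather than interpolation of a fixed operator. The point is that for $\mathrm{Re}(z)<-1$ the extra damping factor makes $m_z$ satisfy the Marcinkiewicz--Lizorkin condition $|\xi^\beta D^\beta m_z|\le C(z)$ for $\beta\in\{0,1\}^2$, so the \v{S}trkalj--Weis multiplier theorem (valid on UMD spaces with property $(\alpha)$, hence on any UMD lattice) gives the $L^q(Y)$ bound for all $1<q<\infty$---but only for $\mathscr{H}_z$ at those values of $z$, not for $\mathscr{H}$ itself. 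On the Hilbert side, Plancherel and the Nagel--Wainger oscillatory estimates give $L^2(H)$-boundedness of $\mathscr{H}_z$ up to $\mathrm{Re}(z)=\tfrac14-\delta$. Two rounds of Stein interpolation (first to reach $L^p(H)$ at a positive $\mathrm{Re}(z)=\sigma_1<\tfrac14$, then between $L^p(H)$ at $\sigma_1$ and $L^p(Y)$ at $-1-\varepsilon$) land at $z=0$ precisely when $(1-\theta)\sigma_1-\theta(1+\varepsilon)=0$; since $\sigma_1<\tfrac14$, this forces $\theta<\tfrac15$, and the accompanying constraint on $p$ yields $\tfrac53<p<\tfrac52$. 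This analytic-family mechanism is the missing idea in your proposal.
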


 A large class of functions $\gamma(t)$ satisfy the conditions in Theorem \ref{main result 2}, such as
$$\gamma(t)= sgn (t)|t|^\alpha,\ (\alpha\ge2)\quad \textrm{and} \quad \gamma(t)=te^{-1/|t|}.$$
The first one is homogeneous, while another one does not have any
homogeneity. This result is a vector-valued extension of Theorem 3.1
of Nagel and Wainger in \cite{NW76}. Theorem \ref{main result 2}
also generalizes the second author's result \cite{L12e} in the case
$X=\ell^q$ with $5/3<q<5/2$. The proof of Theorem \ref{main result
2} is again motivated by the recent development of the vector-valued
Calder\'on-Zygmund theory \cite{HW08}. In fact, in Section 4, we
prove a more general version, i.e. Theorem \ref{main result 2} is
also true if $X$ satisfies the following weaker condition: there
exist $\theta\in(0,\frac15)$, Hilbert space $H$ and UMD space $Y$
with property $(\alpha)$ (recalled in Section 4) such that
$X=[H,Y]_\theta$.
\section{Proof of Theorem \ref{main result}}
The main arguments in this section are from \cite{SW78},
we will repeat some results for completeness.
Before the proof, we need some notations. Let matrix
$A=diag(\alpha_1,\alpha_2,\cdots,\alpha_n)$, then
$\Gamma'(t)=A\Gamma(t)/t$ for $t>0$. We also define a norm
function $\rho(x)$ by the unique positive solution of
$$\sum^n_{i=1}x^2_i\rho^{-2\alpha_i}=1$$
and $\rho(0)=0$. This definition was introduced in the pioneering
work on anisotropic singular integrals of Fabes \cite{Fab66}.
Obviously, $\rho(\delta_tx)=t\rho(x)$ for $t>0$, $\rho(x)=1$ if and
only if the Euclidean norm $|x|=1$ which means $x$ is on the unit
sphere ${\mathbf S}^{n-1}$. See also Proposition 1-9 in \cite{SW78}
for more properties of $\rho$. By a change of variables, we
assume $\alpha_1=1$ and $\alpha_i\ge1$ for $2\le i\le n$, and set
$\Delta=\alpha_1+\alpha_2+\cdots+\alpha_n$. Without lost of
generality, we assume that $\alpha_i\neq\alpha_j$ when $i\neq j$,
then $\Gamma(t)$ does not lie in a proper subspace of $\mathbb R^n$.
If not, $\Gamma$ lies in some proper subspace, then the argument of
Stein and Wainger in \cite[pp.1262]{SW78} implies our desired
result.
\par
For $z\in\mathbb C$, we define an analytic family of operators
${\mathscr H}_z$ by
$$
\widehat{\mathscr{H}_zf}(\xi)=\{\rho(\xi)\}^zm_z(\xi)\hat{f}(\xi),
$$
where $m_z$ are given by
$$
m_z(\xi)={\rm p.v.}\int_{\mathbb R}e^{-2\pi i\xi
\cdot\Gamma(t)}|t|^z\frac{dt}t.
$$
Obviously, ${\mathscr H}_0$ is our original operator $\mathscr H$.

As in \cite{SW78}, the desired result will be concluded by analytic interpolation once we show the following two estimates: For Hilbert space $H$
\begin{equation}\label{L2H anisotropic}
\big\|{\mathscr H}_zf\big\|_{L^2({\mathbb R}^n;H)}\le
C(z)\big\|f\big\|_{L^2(\mathbb R^n;H)},
\end{equation}
where $-1<Re(z)\le\sigma$ for some $\sigma>0$ and $C(z)$ grows at
most polynomially in $|z|$, and for UMD space $Y$
\begin{equation}\label{LpX0 anisotropic}
\|\mathscr H_zf\|_{L^p(\mathbb R^n;Y)}\le
C(z,p)\|f\|_{L^p(\mathbb R^n;Y)},\ \ 1<p<\infty,
\end{equation}
 where $-\beta\le Re(z)\le-\eta$ for arbitrarily positive $\eta$ and
some positive $\beta$ as well as  $C(z,p)$ grows at most as fast as
a polynomial in $|z|$ for fixed $\eta$.
\par
Indeed, we obtain Theorem
\ref{main result} by performing twice the analytic interpolation
argument in \cite{S56} as follows. Let $T_zf(x)=e^{z^2}{\mathscr H}_zf(x)$. Note that
$|e^{z^2}|=e^{Re(z)^2-Im(z)^2}$, then by \eqref{L2H anisotropic}
there exists a constant $M_0$ which is independent of $Im(z)$ such
that
\begin{equation}\label{2H}
\big\|T_zf\big\|_{L^2(\mathbb R^n;H)}\le C(z)
e^{-Im(z)^2}\big\|f\big\|_{L^2(\mathbb R^n;H)}\le
M_0\big\|f\big\|_{L^2(\mathbb R^n;H)}
\end{equation}
when $-1<{\rm
Re}(z)<\sigma$. Also, for any UMD space $Y$ and $q\in(1,\infty)$, by \eqref{LpX0 anisotropic} there
exists a constant $M_1$ which is independent of $Im(z)$ such that
\begin{equation}\label{qY}
\big\|T_zf\big\|_{L^{q}(\mathbb R^n;{Y})}\le
M_1\big\|f\big\|_{L^{q}(\mathbb R^n;{Y})} \quad when \ \
-\beta<{\rm Re}(z)<0.
\end{equation}
Obviously, this inequality holds in particular with $Y=H$.
\par
 For $1<p<\infty$, we choose $\theta_1\in(0,1)$, $\sigma_1<0$, $0<\sigma_0<\sigma$
 and $q_1\in(1,\infty)$ such that
 \begin{equation}\nonumber
\sigma_0(1-\theta_1)+\sigma_1\theta_1=:\sigma_2>0,\
\frac1p=\frac{1-\theta_1}2+\frac{\theta_1}{q_1}.
 \end{equation}
 Interpolating between \eqref{2H} and \eqref{qY} with $Y=H$, we have
\begin{equation}\label{pH}
\big\|T_zf\big\|_{L^p(\mathbb R^n;H)}\le C(p,z)
\big\|f\big\|_{L^p(\mathbb R^n;H)} \quad when \ \ {\rm
Re}(z)=\sigma_2>0.
\end{equation}
\par
Note that $X=[H,Y]_{\theta}$ for some
Hilbert space $H$, UMD space $Y$ and
$\theta\in(0,1)$. For fixed $\theta$,  we choose $\sigma_3<0$ such
that
\begin{equation}\nonumber
0=(1-\theta){\sigma_2}+\theta\sigma_3.
\end{equation}
In the same way, interpolating between \eqref{pH} and \eqref{qY}
with $q=p$, we obtain
\begin{equation}\nonumber
\|\mathscr Hf\|_{L^p(\mathbb R^n;X)}=\|T_0f\|_{L^p(\mathbb
R^n;X)}\le C\|f\|_{L^p(\mathbb R^n;X)}.
\end{equation}
\par

The estimate \eqref{L2H anisotropic} is trivial since Plancherel's theorem remains true
 for Hilbert space valued functions and the original arguments for Lemma 4.2 in \cite{SW78} work here.
 The novelty of the proof lies in the proof of \eqref{LpX0 anisotropic}.  In the case $Y=\ell^q$ with $1<q<\infty$,
 it has been proved in \eqref{LpX0 anisotropic} in \cite{RRT86} by Benedek, Calder\'on and Panzone's argument since $L^q(\ell^q)$-boundedness is trivial. For general UMD space,
 we shall follow Hyt\"onen and Weis's idea \cite{HW08} established recently to prove the $L^p(Y)$ estimates simultaneously for all $1<p<\infty$. The following subsection is devoted to the proof of estimate \eqref{LpX0 anisotropic}.

\subsection{The proof of \eqref{LpX0 anisotropic}} The following
proof is essentially the same as \cite{H07}, we include it here for the sake of completeness. From Lemma 4.4
of \cite{SW78}, we can write that
$$\mathscr{H}_zf(x)=K_z\ast f(x),$$
where
$$
K_z(x)=\int_{\mathbb R}h_z(x-\Gamma(t))|t|^z\frac{dt}{t}\
\text{and}\ \ \hat{h}_z(\xi)=\{\rho(\xi)\}^z.
$$
It is known that $h_z$ is a locally integrable function,
$C^{\infty}$ away from the origin satisfying
$$h_z(\delta_{\lambda}x)=\lambda^{-\Delta-z}h_z(x),\;\lambda>0,\;x\neq0.$$
 Moreover, each
derivative of $h_z(x)$ is bounded by a polynomial in $|z|$, if
$\rho(x)=1$. In particular, $K_z$ has the homogeneity property
$\lambda ^{\Delta}K_z\big(\delta_\lambda x\big)=K_z(x)$.

Let $\hat{\mathcal{D}}_0(\mathbb{R}^n)=\{\psi\in\mathscr S(\mathbb
R^n)|\ \hat{\psi}\in\mathscr{D}(\mathbb R^n),0\notin supp\
\hat{\psi}\}$. Let $\eta\in\mathcal{D}(\mathbb{R}^n)$ have range
$[0,1]$, vanish for $\rho(\xi)\ge2$ and equal $1$ for
$\rho(\xi)\le1$. For $j\in \mathbb{Z}$, we define
$\hat{\phi}_0(\xi)=\eta(\xi)-\eta(\delta_2\xi)$,
$\hat{\varphi}_j(\xi)=\hat{\phi}_0(\delta_{2^{-j}}\xi)$ and
$\hat{\chi}_j(\xi)=\hat{\phi}_{j-1}(\xi)+\hat{\phi}_{j}(\xi)+\hat{\phi}_{j+1}(\xi)$.
Then $\hat{\varphi}_j(\xi)$ is supported in the annulus
$\{2^{j-1}\leq\rho(\xi)\leq2^{j+1}\}$, and
\begin{align}\label{littlewood paley identity 1}
\sum_j\hat{\varphi_j}(\xi)=1 \ \ \text{for}\ \ \xi\neq0.
\end{align}
Moreover,  since $\hat{\chi}_j$ equals 1 on the support of
$\hat{\phi}_j$, we have
\begin{align}\label{littlewood paley identity 2}
\phi_j=\phi_j\ast \chi_j\ast\chi_j.
\end{align}
The estimate \eqref{LpX0 anisotropic} will be deduced from the
following key estimate which will be shown in the next subsection.

\begin{proposition}\label{pro: key estimate}
Let $\phi_0$ and $K_z$ be defined as above. We have
\begin{align*}
\int_{\mathbb{R}^n}|\phi_0\ast K_z(x)|\log^n(e+\rho(x))dx\leq C(z).
\end{align*}
\end{proposition}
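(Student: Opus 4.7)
The plan is to analyze $\phi_0\ast K_z$ via a dyadic decomposition of the $t$-integral defining $K_z$, using cancellation from the principal value for small $|t|$ and Schwartz decay of the auxiliary kernel $H_z:=\phi_0\ast h_z$ for large $|t|$. First I would observe that $\widehat{H_z}(\xi)=\hat{\phi}_0(\xi)\{\rho(\xi)\}^z$ is smooth and compactly supported in the annulus $\{1/2\le\rho(\xi)\le 2\}$, on which $\rho^z$ is smooth with derivative norms polynomial in $|z|$; the inverse Fourier transform therefore gives $|\partial^\beta H_z(y)|\le C_{N,\beta}(z)(1+|y|)^{-N}$ for every $N$, with $C_{N,\beta}(z)$ polynomial in $|z|$. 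Then write $K_z=\sum_{j\in\mathbb{Z}}K_z^{(j)}$ where $K_z^{(j)}$ restricts the defining $t$-integral to $2^j\le|t|\le 2^{j+1}$, so that
$$\phi_0\ast K_z^{(j)}(x)=\int_{2^j\le|t|\le 2^{j+1}}H_z(x-\Gamma(t))\,|t|^{z-1}\mathrm{sgn}(t)\,dt.$$

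For the small-$|t|$ regime ($j\le 0$) I would exploit the cancellation $\int_{2^j\le|t|\le 2^{j+1}}|t|^{z-1}\mathrm{sgn}(t)\,dt=0$ to replace $H_z(x-\Gamma(t))$ by $H_z(x-\Gamma(t))-H_z(x)$. Since $\alpha_i\ge\alpha_1=1$ forces $|\Gamma(t)|\lesssim|t|$ for $|t|\le 1$, the mean value theorem combined with Schwartz estimates on $\nabla H_z$ yields $|\phi_0\ast K_z^{(j)}(x)|\lesssim C(z)\,2^{j(\mathrm{Re}(z)+1)}(1+|x|)^{-N}$ for $|x|\gtrsim 2^j$, and a comparable bound on the small ball $|x|\lesssim 2^j$; integrating against $\log^n(e+\rho(x))$ and summing over $j\le 0$ then converges provided $\mathrm{Re}(z)>-1$. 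For the large-$|t|$ regime ($j\ge 0$), I would forgo cancellation and instead use the quasi-triangle inequality for the anisotropic norm $\rho$, together with $\rho(\Gamma(t))\simeq|t|$, to derive the subadditive estimate $\log^n(e+\rho(x))\lesssim 1+\log^n(e+\rho(x-\Gamma(t)))+\log^n(e+|t|)$; combining this with translation invariance in $x$ and the finite weighted norm $\int|H_z(y)|\log^n(e+\rho(y))\,dy\le C(z)$ gives
$$\int|\phi_0\ast K_z^{(j)}(x)|\log^n(e+\rho(x))\,dx\lesssim C(z)\,2^{j\mathrm{Re}(z)}(|j|+1)^n,$$
which is summable over $j\ge 0$ whenever $\mathrm{Re}(z)<0$.

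Summing the two regimes yields the proposition in the strip $-1<\mathrm{Re}(z)<0$, which is exactly the range relevant for \eqref{LpX0 anisotropic}. The main obstacle I anticipate is the careful polynomial-in-$|z|$ tracking of constants throughout (needed so that Stein's analytic interpolation can be applied afterwards), combined with the clean subadditive handling of the log-weight under translation by $\Gamma(t)$; in particular the exponent $n$ on the logarithm is exactly what the geometric factor $2^{j\mathrm{Re}(z)}$ can absorb in the large-$|t|$ regime, so the weight $\log^n$ is essentially sharp for this argument.
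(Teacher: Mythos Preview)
Your argument is correct and takes a genuinely different route from the paper's.

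The paper proves Proposition~\ref{pro: key estimate} by decomposing the \emph{test function} $\phi_0=\sum_{m\ge 0}\psi_m$ into compactly supported pieces with vanishing mean (Lemma~\ref{lem: decomposition lemma}), and then estimating each $K_z\ast\psi_m$ by invoking a separately proven \emph{weighted H\"ormander condition} on $K_z$ (Lemma~\ref{lem: weighted homander condition}). That H\"ormander lemma is the heart of the paper's approach and requires a fairly delicate splitting of $K_z$ into $K_z^1+K_z^2$ together with the pointwise regularity \eqref{diff} of $h_z$.

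You instead decompose the \emph{$t$-integral} defining $K_z$ dyadically and exploit directly that $H_z=\phi_0\ast h_z$ is Schwartz (since $\widehat{H_z}=\hat\phi_0\,\rho^z\in C_c^\infty$). Cancellation in the odd kernel handles the small-$|t|$ shells; for large $|t|$ you use subadditivity of $\log(e+\cdot)$ under the quasi-triangle inequality for $\rho$ together with $\rho(\Gamma(t))\simeq|t|$. This bypasses both auxiliary lemmas entirely and yields the proposition in the full strip $-1<\mathrm{Re}(z)<0$, matching (indeed containing) the range $-\beta\le\mathrm{Re}(z)\le-\eta$ used in the paper. The trade-off is that the paper's weighted H\"ormander condition is a statement about $K_z$ itself, of independent interest and reusable in other contexts, whereas your argument is tailored to the convolution with this particular $\phi_0$. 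For the purposes of establishing \eqref{LpX0 anisotropic}, your route is shorter and more self-contained; the polynomial-in-$|z|$ tracking you flag as the main obstacle is indeed routine once one notes that the $C^\infty_c$ seminorms of $\hat\phi_0\,\rho^z$ on the fixed annulus $\{1/2\le\rho\le 2\}$ grow polynomially in $|z|$.
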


With above preparations at hand, we finish the proof of the estimate
\eqref{LpX0 anisotropic}.
\begin{proof}
For fixed $z$, we denote $K_z$ by $K$ for simplicity. Given
$f\in\hat{\mathcal{D}}_0(\mathbb{R}^n)\otimes Y$,
$g\in\hat{\mathcal{D}}_0(\mathbb{R}^n)\otimes Y^*$, by
\eqref{littlewood paley identity 1} and \eqref{littlewood paley
identity 2}, we have
\begin{align*}
\langle g,K\ast f\rangle&=\langle \tilde{K}\ast
g,f\rangle=\sum_j\langle \phi_j\ast\tilde{K}\ast(\chi_j\ast
g),\chi_j\ast f\rangle,
\end{align*}
where the summation is finite and $\tilde{K}(x)=K(-x)$. Changing
variable and using the fact
$\lambda^{\Delta}K_z(\delta_{\lambda}x)=K_z(x)$,
$$(\phi_j\ast\tilde{K})\ast(\chi_j\ast g)(x)=\int_{\mathbb R^n}\phi_0\ast \tilde{K}(y)(\chi_j\ast g)(x-\delta_{2^{-j}}y)dy.$$
Hence, by H\"older's inequality and the Khintchine-Kahane inequality
\begin{align*}
&\big|\langle g,K\ast f\rangle\big|=\big|\int_{\mathbb R^n}\mathbb{E}\langle \sum_j\epsilon_j\chi_j\ast g(\cdot-\delta_{2^{-j}}y),\sum_i\epsilon_i \phi_0\ast K(y)\chi_i\ast f\rangle dy\big|\\
&\leq \int_{\mathbb R^n}\mathbb{E}\| \sum_j\epsilon_j\chi_j\ast
g(\cdot-\delta_{2^{-j}}y)\|_{L^{p'}(Y^*)}
\mathbb{E}\|\sum_i\epsilon_i\chi_i\ast f\|_{L^p(Y)}|\phi_0\ast K(y)|
dy.
\end{align*}
It is easy to check that $m=\sum_j\epsilon_j\hat{\chi}_j$ is an
anisotropic multiplier. Hence, by Theorem 3 in \cite{H07}, we have
\begin{align}\label{anisotropic multiplier}
\|\sum_{j}\epsilon_j\chi_j\ast f\|_{L^p(\mathbb R^n;Y)}\leq
C_{p,X}\|f\|_{L^p(\mathbb R^n;Y)}.
\end{align}
By Proposition {\ref{pro: key estimate}} and
\eqref{anisotropic multiplier}, we shall finish the proof by showing
\begin{align*}
\mathbb{E}\|\sum_j\epsilon_j\chi_j\ast
g(\cdot-\delta_{2^{-j}}y)\|_{L^{p'}(Y^*)}\leq
C\log^n(e+\rho(y))\mathbb{E} \|\sum_j\epsilon_j\chi_j\ast
g\|_{L^{p'}(Y^*)}.
\end{align*}
Let $e_i$ be the $i$-th standard unit vector. Above estimate is just a $n$-fold application of
\begin{align*}
\mathbb{E}\|\sum_j\epsilon_j\chi_j\ast
g(\cdot-\delta_{2^{-j}}y_ie_i)\|_{L^{p'}(Y^*)}\leq
C\log(e+\rho(y))\mathbb{E} \|\sum_j\epsilon_j\chi_j\ast
g\|_{L^{p'}(Y^*)},
\end{align*}
which follows from
Lemma 10 of  Bourgain \cite{Bou86}.
\end{proof}

\subsection{The proof of Proposition \ref{pro: key estimate}}
The proof  of Proposition \ref{pro: key estimate} is based on the
following two lemmas. The first one states that the kernel $K_z$
satisfies a weighted H\"ormander condition, which will be verified
at the end of this subsection.
\begin{lemma}\label{lem: weighted homander condition}
If $-\beta\leq Re(z)\leq-\eta$, then for sufficiently large constants $C_0$ and $C_1(z)$, we have
\begin{align}\label{weighted homander condition}
\int_{\rho(x)\geq C_0\rho(y)}|K_z(x-y)-K_z(x)|\log^n(e+\rho(x))dx\leq C_1(z)\log^n(e+\rho(y))
\end{align}
for any $y\in\mathbb{R}^n\setminus\{0\}$.
Moreover, $C_1(z)$ grows at most as fast as a polynomial in $|z|$ for a fixed $\eta$.
\end{lemma}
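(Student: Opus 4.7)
The plan is to establish the weighted estimate via a pointwise smoothness bound on $K_z$ away from the origin, followed by a dyadic decomposition of the region $\{\rho(x)\ge C_0\rho(y)\}$ into anisotropic annuli adapted to $\rho$.

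First I would record pointwise estimates on $K_z$. Since $h_z$ is $C^\infty$ away from $0$ with each derivative polynomially bounded in $|z|$ on $\{\rho=1\}$, one can argue (as in \cite{SW78, H07}) that $K_z$ is smooth off the origin, and then the homogeneity $K_z(\delta_\lambda x)=\lambda^{-\Delta}K_z(x)$ yields, upon differentiating in $x_i$,
\[
|\partial_i K_z(x)|\le C(z)\,\rho(x)^{-\Delta-\alpha_i},\qquad x\ne0,
\]
with $C(z)$ polynomial in $|z|$ for $Re(z)\in[-\beta,-\eta]$. Combining this with the anisotropic bound $|y_i|\lesssim\rho(y)^{\alpha_i}$ (which follows from writing $y=\delta_{\rho(y)}\tilde y$ with $\tilde y$ on the compact set $\{\rho=1\}$), the mean value theorem together with $\alpha_i\ge1$ and $\rho(y)/\rho(x)\le 1/C_0<1$ produces the smoothness estimate
\[
|K_z(x-y)-K_z(x)|\le C(z)\,\frac{\rho(y)}{\rho(x)^{\Delta+1}}
\]
whenever $\rho(x)\ge C_0\rho(y)$ and $C_0$ is chosen large enough that the segment from $x$ to $x-y$ stays in a region where $\rho$ is comparable to $\rho(x)$.

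Next, I would split the integration region into anisotropic shells $A_k=\{2^k\le\rho(x)<2^{k+1}\}$ for $k\ge k_0$, where $2^{k_0}\sim C_0\rho(y)$. Since the Jacobian of $\delta_{2^k}$ is $2^{k\Delta}$, we have $|A_k|\sim 2^{k\Delta}$, and on $A_k$ the weight obeys $\log^n(e+\rho(x))\lesssim(1+|k|)^n$. Inserting the pointwise bound above yields
\[
\int_{A_k}|K_z(x-y)-K_z(x)|\log^n(e+\rho(x))\,dx\lesssim C(z)\,\rho(y)\cdot 2^{-k}(1+|k|)^n.
\]
Summing the geometric series over $k\ge k_0$, the term with $k=k_0$ dominates, producing
\[
C(z)\,\rho(y)\cdot 2^{-k_0}(1+k_0)^n\lesssim C(z)\,\log^n(e+\rho(y)),
\]
since $2^{-k_0}\sim 1/(C_0\rho(y))$ and $1+k_0\sim\log(e+\rho(y))$.

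The main obstacle is establishing the pointwise derivative bound on $K_z$ with the correct polynomial $z$-dependence, since $K_z$ is defined by a principal-value oscillatory integral along $\Gamma$ rather than by a closed-form smooth expression; the cancellation in the $|t|^z\,dt/t$ integral has to be exploited together with the smoothness of $h_z$ on $\{\rho=1\}$. This step is exactly the anisotropic regularity analysis adapted from \cite{SW78, H07} and is the only nontrivial ingredient. Once it is in place, the weighted dyadic summation outlined above delivers the inequality \eqref{weighted homander condition} immediately.
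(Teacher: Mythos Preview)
Your argument hinges on the pointwise estimate $|\partial_i K_z(x)|\le C(z)\rho(x)^{-\Delta-\alpha_i}$, from which you derive the Lipschitz-type bound $|K_z(x-y)-K_z(x)|\lesssim C(z)\rho(y)\rho(x)^{-\Delta-1}$. This is where the gap lies: neither \cite{SW78} nor \cite{H07} proves that $K_z$ is smooth away from the origin, and in fact such smoothness generally fails. The kernel is given by $K_z(x)=\int h_z(x-\Gamma(t))\,|t|^z\,dt/t$, and $h_z$ is singular at the origin with $|h_z(u)|\sim\rho(u)^{-\Delta-Re(z)}$. For $x$ on (or near) the curve, say $x=\Gamma(t_0)=\delta_{t_0}\mathbf e$, the integrand blows up at $t=t_0$; a short computation with $\alpha_1=1<\alpha_2<\cdots<\alpha_n$ shows $\rho(\Gamma(t_0)-\Gamma(t))\sim|t-t_0|^{1/\alpha_n}$ near $t_0$, so the singularity is of order $|t-t_0|^{-(\Delta+Re(z))/\alpha_n}$, which is not integrable once $\Delta+Re(z)>\alpha_n$ (the typical situation for $n\ge2$ and $Re(z)$ near $0$). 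Thus $K_z$ is singular along the entire dilation orbit of $\mathbf e$, not only at the origin, and the homogeneity $K_z(\delta_\lambda x)=\lambda^{-\Delta}K_z(x)$ cannot by itself produce the derivative bound you need.

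The paper (following \cite{SW78}) avoids this obstacle by never invoking pointwise regularity of $K_z$. After a scaling reduction to $\rho(y)=1$, it splits $K_z=K_z^1+K_z^2$ according to $|t|\le1$ versus $|t|>1$. For $K_z^1$ the curve stays in a bounded set, so on $\{\rho(x)\ge C_0\}$ one can apply the smoothness estimate \eqref{diff} for $h_z$ directly. For $K_z^2$ the inner integral is further split according to whether $\rho(x-\Gamma(t))$ is large or small: in the far region one again uses \eqref{diff}, while in the near-curve region one forgoes any cancellation and simply bounds $|h_z(x-y-\Gamma(t))|+|h_z(x-\Gamma(t))|$ using the local integrability of $h_z$. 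The logarithmic weight is carried through each piece by elementary inequalities of the form $\log(e+\lambda\rho)\le\log(e+\lambda)\log(e+\rho)$. Your dyadic-shell summation is fine once a legitimate difference estimate is available, but as written the crucial input is missing; to salvage the approach you would have to reproduce essentially the same near-curve/far-curve analysis that the paper performs.
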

The second lemma is a kind of decomposition lemma which has been
established in Lemma 4.10 of \cite{HW08}. We reformulate it in our
anisotropic case.
\begin{lemma}\label{lem: decomposition lemma}
Let $\varphi\in\mathscr{S}(\mathbb{R}^n)$ with vanishing integral.
Then there exists a decomposition $\varphi=\sum_{m\geq0}\psi_m$ with
the following properties:
\begin{align*}
\psi_m\in\mathcal{D}(\mathbb{R}^n),\;\mathrm{supp}\psi_m\subseteq
\{x|\ \rho(x)\le C2^{\alpha m}\},\;\int_{\mathbb R^n}\psi_m(y)dy=0,
\end{align*}
where $C$ and $\alpha$ are two universal constants only depending on
the norm $\rho$ and the dimension $n$, and for every
$p\in[1,\infty]$ and every $M>0$, the sequence of Lebesgue norms
$\|\psi_m\|_{L^p}$, as well as $\|\hat{\psi}_m\|_{L^p}$, is
$\mathcal{O}(2^{-mM})$ as $m\rightarrow\infty$.
\end{lemma}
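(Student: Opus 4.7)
The plan is to build the $\psi_m$ by a smooth annular decomposition of $\varphi$ adapted to the anisotropic norm $\rho$, and then to apply a small mean-correcting perturbation that uses the hypothesis $\int\varphi=0$.

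First I would fix an auxiliary $\theta\in\mathcal D(\mathbb R^n)$ with $\theta\equiv 1$ on $\{\rho\le 1\}$ and $\mathrm{supp}\,\theta\subseteq\{\rho\le 2\}$. Setting $\Phi_m(x)=\varphi(x)\theta(\delta_{2^{-m}}x)$ for $m\ge 0$ and $\Phi_{-1}\equiv 0$, define the preliminary pieces $\tilde\psi_m=\Phi_m-\Phi_{m-1}$. By telescoping, $\sum_{m\ge 0}\tilde\psi_m=\varphi$ in $\mathscr S$; each $\tilde\psi_m$ lies in $\mathcal D(\mathbb R^n)$; and for $m\ge 1$ the support of $\tilde\psi_m$ sits in the anisotropic annulus $\{2^{m-1}\le\rho(x)\le 2^{m+1}\}$, while $\tilde\psi_0$ is supported in $\{\rho\le 2\}$. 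This already gives a compactly supported $\sum_m$-expansion, with support radius $\lesssim 2^m$ in the $\rho$-metric, so the support condition of the lemma will hold with $\alpha=1$ and a universal $C$.

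Next I would establish the two $L^p$ decay claims separately. On the physical side, Schwartz decay of $\varphi$, the bound $|x|\gtrsim 2^m$ on the annulus (which follows from $\rho(x)\sim 2^m$ together with $\alpha_i\ge 1$, since $\alpha_1=1$), and the volume estimate $|\{\rho\le R\}|\lesssim R^\Delta$ immediately give $\|\tilde\psi_m\|_{L^p}=\mathcal O(2^{-mM})$ for every $M$. On the Fourier side, I would pay for smoothness: derivatives of the cutoff $\theta(\delta_{2^{-m}}\cdot)$ of order $\beta$ satisfy $|\partial^\beta[\theta(\delta_{2^{-m}}\cdot)]|\le C_\beta$ uniformly in $m$ (since $\alpha_i\ge 1$ makes each factor $2^{-m\alpha_i}$ bounded) and are supported in the same annulus, so the Leibniz rule together with Schwartz decay of $\varphi$ yields $\|(1-L)^N\tilde\psi_m\|_{L^1}=\mathcal O(2^{-mM})$ for every fixed $N$ and $M$, where $L=\sum_i\partial_{x_i}^2$ is the Euclidean Laplacian. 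The identity $(1+|\xi|^2)^N\hat{\tilde\psi}_m(\xi)=\mathcal F[(1-L)^N\tilde\psi_m](\xi)$ then produces the pointwise bound $|\hat{\tilde\psi}_m(\xi)|\le C_{N,M}2^{-mM}(1+|\xi|)^{-2N}$, and picking $N$ large relative to $p$ converts this to $\|\hat{\tilde\psi}_m\|_{L^p}=\mathcal O(2^{-mM})$.

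Finally I would enforce the vanishing-integral condition. Set $a_m=\int\tilde\psi_m$; then $\sum_m a_m=\int\varphi=0$ and $|a_m|\le\|\tilde\psi_m\|_{L^1}=\mathcal O(2^{-mM})$. Fix once and for all an $\omega\in\mathcal D(\mathbb R^n)$ with $\mathrm{supp}\,\omega\subseteq\{\rho\le 1\}$ and $\int\omega=1$, and set $\psi_m:=\tilde\psi_m-a_m\omega$. Then $\int\psi_m=0$; summing, $\sum_m\psi_m=\varphi-\bigl(\sum_m a_m\bigr)\omega=\varphi$; and the support and both $L^p$-decay statements are preserved up to a harmless constant, giving the lemma.

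The main obstacle is the Fourier-side decay of $\hat{\tilde\psi}_m$: since $\hat\varphi$ itself has no annular concentration, no naive argument produces smallness of $\hat{\tilde\psi}_m$ in any $L^p$. What rescues the bound is that each $\tilde\psi_m$ is smooth and concentrated in a set of $\rho$-radius $\sim 2^m$, and that the anisotropic chain rule makes every derivative of $\theta(\delta_{2^{-m}}\cdot)$ bounded uniformly in $m$ when $\alpha_i\ge 1$. Trading derivatives for powers of $(1+|\xi|)^{-1}$ via the identity above, and checking that this exchange remains uniform in $m$, is the one place where the anisotropic geometry must be used explicitly; the mean-correction step is then cosmetic.
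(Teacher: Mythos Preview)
Your argument is correct, but the route differs from the paper's. The paper does not build the decomposition from scratch; it invokes the isotropic version of the lemma (Lemma~4.10 of Hyt\"onen--Weis, where the pieces $\psi_m$ are supported in Euclidean balls $\{|x|\le 2^m\}$ and already satisfy the $L^p$ and Fourier--$L^p$ decay) and then converts the Euclidean support condition into the anisotropic one via the pointwise comparison $\rho(x)\le c\,|x|^{a}$ from Stein--Wainger's Proposition~1--9, which yields $\rho(x)\le C2^{\alpha m}$ for suitable $C,\alpha$. In other words, the paper treats the lemma as a corollary of the Euclidean decomposition plus a norm-comparison.

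Your approach instead carries out the construction directly with cutoffs adapted to $\rho$, exploiting the normalization $\alpha_i\ge 1$ in two places: to get $|x|\gtrsim 2^m$ on the $\rho$-annulus (for the physical-side decay) and to keep all derivatives of $\theta(\delta_{2^{-m}}\cdot)$ uniformly bounded (for the Fourier-side decay). This is self-contained and in fact gives the sharper constant $\alpha=1$ in the support condition, though that plays no role in the application. The paper's route is shorter because the hard work has been outsourced; your route makes explicit why the anisotropic geometry causes no difficulty once $\alpha_i\ge 1$.
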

\begin{proof}Let us give a quick explanation of this lemma.
From Lemma 4.10 of \cite{HW06}, $\psi_m$ is supported in $\{x|\
|x|\le 2^m\}$. Fix $x\in\{x|\ |x|\le 2^m\}$, by Proposition 1-9 of
\cite{SW78}, if $\rho(x)\geq1$, then
$$\rho(x)\leq c_1|x|^{\alpha_1}\leq c_12^{a_1m}$$
and if $\rho(x)\leq1$, then
$$\rho(x)\leq c_2|x|^{a_2}\leq c_22^{a_2m}$$
with $c_1,c_2,a_1,a_2$ positive constants. We obtain the desired
result by choosing $C=\max\{c_1,c_2\}$ and $\alpha=\max\{a_1,a_2\}$.
\end{proof}
\begin{proof}[Proof of Proposition {\ref{pro: key estimate}}] The main idea comes from \cite{HW06}, we include most details here for completeness.
By Lemma \ref{lem: decomposition lemma}, we write
$\phi_0=\sum_{m\geq0}\psi_m$ with $\psi_m$'s satisfying the
properties stated in that lemma. Then we decompose $K_z$ into pieces
$$K_{z,m}(x)=K_z\ast\psi_m(x)$$
and estimate each of them respectively.

We first estimate the integral outside the larger
ellipsoid $\mathcal B_1=\{x|\ \rho(x)\le CC_12^{\alpha m}\}$ with
$C_1$ fixed later depending on $C_0$. Recall that $\psi_m$ is
supported in the ellipsoid $\mathcal B_0=\{x|\ \rho(x)\le C2^{\alpha
m}\}$ and the integral of $\psi_m$ vanishes, by Fubini's theorem  and
Lemma \ref{lem: weighted homander condition}, we obtain
\begin{align*}
&\int_{\mathcal B_1^c}|K_{z,m}(x)|\log^n(e+\rho(x))dx\\
&=\int_{\mathcal B_1^c}|\int_{\mathcal
B_0}K_{z}(x-y)\psi_m(y)dy|\log^n(e+\rho(x))dx\\
&\leq \int_{\mathcal B_0}\int_{\rho(x)\geq
C_0\rho(y)}|K_{z}(x-y)-K_{z}(x)|\log^n(e+\rho(x))dx\psi_m(y)dy\\
&\leq C_1(z) \int_{\mathcal B_0}\log^n(e+\rho(y))\psi_m(y)dy\leq
C_1(z)\|\psi_m\|_{L^{\infty}} \int_{\mathcal
B_0}\log^n(e+\rho(y))dy.
\end{align*}
By Lemma \ref{lem: decomposition lemma}, the last quantity is of
order $\mathcal{O}(2^{-m})$ as $m\rightarrow\infty$ since
$\|\psi_m\|_{L^{\infty}}\leq C_M2^{-mM}$ for $M>0$ while
$$\int_{\mathcal B_0}\log^n(e+\rho(y))dy\leq C2^{mN}$$
for a fixed $N$.

Inside the ellipsoid $\mathcal B_1$, the computation is easier because of the fact $\|\hat{K}_z\|_{L^{\infty}}\leq C(z)$, then
\begin{align*}
\int_{\mathcal B_1}|K_{z,m}(x)|&\log^n(e+\rho(x))dx\leq \|K_{z,m}\|_{L^{\infty}}\int_{\mathcal B_1}\log^n(e+\rho(x))dx\\
&\leq \int_{\mathcal B_1}\log^n(e+\rho(x))dx\|\hat{K}_{z,m}\|_{L^1}\\
&=\int_{\mathcal B_1}\log^n(e+\rho(x))dx\int_{\mathbb{R}^n}|\hat{K}_z(\xi)\hat{\psi}_m(\xi)|d\xi\\
&\leq\|\hat{K}_z\|_{L^{\infty}}\|\hat{\psi}_m\|_{L^1}\int_{\mathcal
B_1}\log^n(e+\rho(x))dx \leq C(z)2^{-m}.
\end{align*}
The last inequality holds due to the same reason that for the case
outside the ellipsoid. Finally, we obtain Proposition \ref{pro: key
estimate} by summing over $m$.
\end{proof}
To complete the proof of Proposition \ref{pro: key
estimate}, we still need to show Lemma \ref{lem: weighted homander
condition}.
\begin{proof}[Proof of Lemma \ref{lem: weighted homander condition}]
We follow the main sketch provided in \cite{SW78}, but improve related estimates. To verify $K_z$ satisfying \eqref{weighted homander condition}, we
may assume that $\rho(y)=1$, it suffices to prove that
\begin{equation}\label{horcon}
\int_{\rho(x)\ge C_0}|K_z(x-y)-K_z(x)|\log^n\big(e+\rho(x)\big)dx\le
C(z).
\end{equation}
\par
In fact, we set $\lambda=\rho(y)$ and $y'=y/\lambda$. Obviously,
$\rho(y')=1$. By a linear transformation $x=\delta_\lambda x'$ and the homogeneity of $K_z$, we have
\begin{align*}
&\int_{\rho(x)\ge
C_0\rho(y)}|K_z(x-y)-K_z(x)|\log^n\big(e+\rho(x)\big)dx\\
=&
\int_{\rho(x')\ge
C_0}|K_z(x'-y')-K_z(x')|\log^n\big(e+\lambda\rho(x')\big)dx'.
\end{align*}
If $\lambda=\rho(y)\ge6$, it is trivial that
\begin{equation}\nonumber
\log\big(e+\lambda\rho(x')\big)\le\log\big(e+\lambda\big)+\log\big(e+\rho(x')\big)\le
\log\big(e+\lambda\big)\log\big(e+\rho(x')\big),
\end{equation}
where we use the assumption that $C_0\ge6$. Then,
\begin{align*}
&\int_{\rho(x)\ge
C_0\rho(y)}|K_z(x-y)-K_z(x)|\log^n\big(e+\rho(x)\big)dx\\
&\le \int_{\rho(x')\ge
C_0}|K_z(x'-y')-K_z(x')|\log^n\big(e+\rho(x')\big)dx'\log^n\big(e+\rho(y)\big)\\
&\le C(z)\log^n\big(e+\rho(y)\big).
\end{align*}
When $\lambda=\rho(y)<6$, by \eqref{horcon}, we get
\begin{align*}
&\int_{\rho(x)\ge
C_0\rho(y)}|K_z(x-y)-K_z(x)|\log^n\big(e+\rho(x)\big)dx\\
&\le 2^n\int_{\rho(x')\ge
C_0}|K_z(x'-y')-K_z(x')|\log^n\big(e+\rho(x')\big)dx'\\
&\le C(z)\le C(z)\log^n\big(e+\rho(y)\big).
\end{align*}
\par
To prove \eqref{horcon}, we define $K_z^1$ and $K_z^2$ by
$$
K_z^1(x)=\int_{|t|\le1}h_z(x-\Gamma(t))|t|^z\frac{dt}t\ \text{and}\
K_z^2(x)=K_z(x)-K_z^1(x),
$$
respectively. We split the integral as
\begin{align*}
&\int_{\rho(x)\ge C_0}|K_z(x-y)-K_z(x)|\log^n\big(e+\rho(x)\big)dx\\
&\le \int_{\rho(x)\ge C_0}|K_z^1(x)|\log^n\big(e+\rho(x)\big)dx\\
& +\int_{\rho(x)\ge C_0}|K_z^1(x-y)|\log^n\big(e+\rho(x)\big)dx\\
&+\int_{\rho(x)\ge
C_0}|K_z^2(x-y)-K_z^2(x)|\log^n\big(e+\rho(x)\big)dx.
\end{align*}

To estimate first two summands, we need a estimate related to $h_z$,
which can be found in \cite[pp.1273]{SW78}. The homogeneity
and smoothness of $h_z$ away from origin imply that
\begin{equation}\label{diff}
|h_z(x-y)-h_z(x)|\le C(z)\frac{|y|}{\{\rho(x)\}^{\Delta+Re(z)+\mu}}
\end{equation}
for some $\mu>0$, provide $|y|/|x|$ is sufficiently small.

We set $\beta=\min\{\mu,1\}$. For the first integral, by using
Fubini's theorem and \eqref{diff},
 we have
\begin{align*}
&\int_{\rho(x)\ge C_0}|K_z^1(x)|\log^n\big(e+\rho(x)\big)dx\\
&\le\int_{\rho(x)\ge C_0}\int_{|t|\le1}|h_z(x-\Gamma(t))-h_z(x)||t|^{Re(z)-1}dt\log^n\big(e+\rho(x)\big)dx\\
&\le\int_{|t|\le1}|t|^{Re(z)-1}\int_{\rho(x)\ge C_0}|h_z(x-\Gamma(t))-h_z(x)|\log^n\big(e+\rho(x)\big)dxdt\\
&\le\int_{|t|\le1}|t|^{Re(z)-1}|\Gamma(t)|\int_{\rho(x)\ge C_0}\rho(x)^{-[\Delta+Re(z)+\mu]}\log^n\big(e+\rho(x)\big)dxdt\\
&\le C(z),
\end{align*}
where we use the fact that $-\beta<Re(z)<0$.
\par
The norm function $\rho(x)$ have the property of $\rho(x+y)\le
c\big(\rho(x)+\rho(y)\big)$ for some $c>0$(see Proposition 1-9 in
\cite{SW78}). Specially, we set $C_0\ge\max\{6,3c\}$. Note that
$\rho(x-y)\ge \frac1c\rho(x)-\rho(y)\ge \frac{C_0}c-1\ge2$ and
$\rho(x)\le c[\rho(x-y)+\rho(y)]\le c\rho(x-y)+c$. Using a linear
transformation, we treat the second summand as the first one,
\begin{align*}
&\int_{\rho(x)\ge C_0}|K_z^1(x-y)|\log^n\big(e+\rho(x)\big)dx\\
&\le\int_{\rho(x)\ge 2}|K_z^1(x)|\log^n\big(e+c+c\rho(x)\big)dx\le C(z).
\end{align*}

Finally, using Fubini's theorem, we have
\begin{eqnarray*}
&&\int_{\rho(x)\ge C_0}|K_z^2(x-y)-K_z^2(x)|\log^n\big(e+\rho(x)\big)dx\\
&\le&\int_{|t|\ge1}\int_{\rho(x)\ge
C_0}\big|h_z\big(x-y-\Gamma(t)\big)-h_z\big(x-\Gamma(t)\big)\big|\log^n\big(e+\rho(x)\big)\frac{dxdt}{|t|^{1-Re(z)}}.
\end{eqnarray*}
We divide the inner integral above according to the distance between
$x$ and $\Gamma(t)$. Note that $\rho(y)=1$, if $|y|/|x-\Gamma(t)|$
is sufficient small, that is $|x-\Gamma(t)|$ is away from the
origin, we can get that $\rho(x-\Gamma(t))\ge C_2$, where $C_2$ is an
appropriate constant. In this case, by \eqref{diff} and a
linear transformation, we obtain the following estimate
\begin{align*}
&\int_{|t|\ge1}\int_{\substack {\rho(x)\ge C_0\\ \rho(x-\Gamma(t))\ge C_2}}\big|h_z\big(x-y-\Gamma(t)\big)-h_z\big(x-\Gamma(t)\big)\big|\log^n\big(e+\rho(x)\big) \frac{dxdt}{|t|^{1-Re(z)}}\\
&\le C\int_{|t|\ge1}\int_{\substack {\rho(x)\ge C_0\\ \rho(x-\Gamma(t))\ge C_2}}\frac{|y|}{\{\rho\big(x-\Gamma(t)\big)\}^{\Delta+\mu+Re(z)}}\log^n\big(e+\rho(x)\big)\frac{dxdt}{|t|^{1-Re(z)}}\\
&\le C\int_{|t|\ge1}\int_{\rho(x)\ge C_2}\frac{1}{\{\rho(x)\}^{\Delta+\mu+Re(z)}}\log^n\big(e+c\rho(x)+ct\big)\frac{dxdt}{|t|^{1-Re(z)}}\\
&\le C\int_{|t|\ge1}\int_{\rho(x)\ge C_2}\frac{1}{\{\rho(x)\}^{\Delta+\mu+Re(z)}}\big\{\log^n\big(e+\rho(x)\big)+\log^n\big(e+t\big)\big\}\frac{dxdt}{|t|^{1-Re(z)}}\\
&\le C,
\end{align*}
where we use the fact that for fixed $|t|\ge1$, $\rho(x)\le
c[\rho(x-\Gamma(t))+\rho(\Gamma(t))]=c[\rho(x-\Gamma(t))+t]$.
\par
It is trivial that $\rho\big(x+y+\Gamma(t)\big)\le
c^2[\rho(x)+\rho(y)+\rho(\Gamma(t))]=c^2[1+\rho(x)+t]$. Then, the
remainder can be controlled by
 \begin{align*}
&\int_{|t|\ge1}\int_{\substack {\rho(x)\ge C_0\\ \rho(x-\Gamma(t))\le C_2}}[|h_z(x-y-\Gamma(t))|+|h_z(x-\Gamma(t))|]\log^n(e+\rho(x))\frac{dxdt}{|t|^{1-Re(z)}}\\
&\le\int_{|t|\ge1}\int_{\substack {\rho(x)\ge C_0\\ \rho(x-\Gamma(t))\le C_2}}|h_z\big(x-y-\Gamma(t)\big)|\log^n\big(e+\rho(x)\big)dx |t|^{Re(z)-1}dt\\
&+\int_{|t|\ge1}\int_{\substack {\rho(x)\ge C_0\\ \rho(x-\Gamma(t))\le C_2}}|h_z\big(x-\Gamma(t)\big)|\log^n\big(e+\rho(x)\big)dx |t|^{Re(z)-1}dt\\
\end{align*}
\begin{align*}
&\le C\int_{|t|\ge1}\int_{\substack {\rho(x)\le c(C_2+1)}}|h_z(x)|dx |t|^{Re(z)-1}\log^n(e+t)dt\\
&\le C(z),
\end{align*}
where we use the fact that $h_z$ is locally integrable.
\end{proof}

\section{Anisotropic singular integrals}
It was shown by Calder\'on and Zygmund \cite{CaZy52} that the
$L^p$-boundedness of singular integrals with rough kernels can be
deduced from the $L^p$-boundedness of the (directional) Hilbert
transform using the method of rotations. In this section, we show a
similar phenomenon happens, that is, the $L^p(X)$-boundedness of
Hilbert transforms along curve $\Gamma(t)=(|t|^{\alpha_1}sgn
t,|t|^{\alpha_2}sgn t,\cdots,|t|^{\alpha_n}sgn t)$ considered in the
previous section implies the $L^p(X)$ boundedness of singular
integrals $T_{\Omega}$ with kernels of the form
$K(x)=\Omega(x)\rho(x)^{-\Delta}$, where $\Omega$ is a function on $\mathbb{R}^n\setminus\{0\}$ satisfying
the homogeneity $\Omega(\delta_tx)=\Omega(x)\ \text{for all}\ t>0,$
size condition
\begin{equation}\label{siz}
\int_{\mathbf
S^{n-1}}\sum^n_{i=1}\alpha_i\omega^2_i|\Omega(\omega)|d\omega<\infty,
\end{equation}
 and the cancelation condition
\begin{equation}\nonumber
\int_{\mathbf
S^{n-1}}\sum^n_{i=1}\alpha_i\omega^2_i\Omega(\omega)d\omega=0,
\end{equation}
which can be understood from the following change-of-variable formula
$$dx=t^{\Delta-1}\sum^n_{i=1}\alpha_i{\omega}^2_idtd\omega.$$

\begin{theorem}\label{app}
Let $X\in\mathcal I$. If $\Omega$ is odd,  then the
operators $T_{\Omega}$ described previously  are bounded on $L^p(\mathbb
R^n;X)$ for $1<p<\infty$.
\end{theorem}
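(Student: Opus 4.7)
The plan is to carry out the classical Calder\'on--Zygmund rotation method in the anisotropic setting, reducing $T_{\Omega}$ to the Hilbert transform $\mathscr H$ along the canonical curve $\Gamma$ treated in Theorem \ref{main result}. Using the anisotropic polar decomposition $y=\delta_t\omega$ with $t>0$, $\omega\in{\mathbf S}^{n-1}$, and $dy=t^{\Delta-1}\sum_i\alpha_i\omega_i^2\,dt\,d\omega$, one has $\rho(y)=t$ and $\Omega(y)=\Omega(\omega)$ by homogeneity, and $\rho\equiv 1$ on ${\mathbf S}^{n-1}$. Performing the substitution on the truncated integral $\{\rho(y)>\varepsilon\}$, then applying the change of variable $\omega\mapsto-\omega$ on half of the resulting terms (which flips the sign of $\Omega$ and sends $\delta_t\omega$ to $-\delta_t\omega$) and averaging, one arrives at the rotation identity
\begin{equation*}
T_{\Omega}f(x)=\frac{1}{2}\int_{{\mathbf S}^{n-1}}\Omega(\omega)\Big(\sum_{i=1}^{n}\alpha_i\omega_i^2\Big)\mathscr{H}_\omega f(x)\,d\omega,\quad \mathscr{H}_\omega f(x):=\mathrm{p.v.}\!\int_{\mathbb R}\!f(x-D_\omega\Gamma(t))\frac{dt}{t},
\end{equation*}
where $D_\omega=\mathrm{diag}(\omega_1,\ldots,\omega_n)$ and $\Gamma$ is the canonical homogeneous curve.

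Next, I would establish a uniform bound $\|\mathscr{H}_\omega f\|_{L^p({\mathbb R}^n;X)}\le C_p\|f\|_{L^p({\mathbb R}^n;X)}$ for almost every $\omega\in{\mathbf S}^{n-1}$. The null set $\{\omega:\omega_1\cdots\omega_n=0\}$ contributes nothing to the $\omega$-integral, so I may restrict attention to those $\omega$ with $D_\omega$ invertible. Since $D_\omega$ commutes with the dilation $\delta_t$, the linear change of variables $x=D_\omega y$ yields
\begin{equation*}
\mathscr{H}_\omega f(D_\omega y)=\mathscr{H}(f\circ D_\omega)(y),
\end{equation*}
so, setting $g:=f\circ D_\omega$ and invoking Theorem \ref{main result}, the Jacobian $|\det D_\omega|=|\omega_1\cdots\omega_n|$ is introduced when comparing $\|\mathscr{H}_\omega f\|_{L^p(X)}$ to $\|\mathscr{H}g\|_{L^p(X)}$ and consumed when comparing $\|g\|_{L^p(X)}$ back to $\|f\|_{L^p(X)}$, leaving a constant $C_p$ independent of $\omega$.

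The proof is then concluded by Minkowski's integral inequality,
\begin{equation*}
\|T_\Omega f\|_{L^p(X)}\le\frac{C_p}{2}\int_{{\mathbf S}^{n-1}}|\Omega(\omega)|\sum_{i=1}^n\alpha_i\omega_i^2\,d\omega\cdot\|f\|_{L^p(X)},
\end{equation*}
which is finite by the size condition \eqref{siz}. The main obstacle I anticipate is making the rotation identity rigorous at the principal-value level: the expression $\int_0^\infty f(x-\delta_t\omega)\,dt/t$ does not converge absolutely, so the symmetrization using the oddness of $\Omega$ must be performed on truncated integrals $\int_{t>\varepsilon}$, with a Fubini exchange justified by absolute convergence after the cancellation, and then one passes to the limit $\varepsilon\to 0$ using standard approximation for $f\in C_0^\infty({\mathbb R}^n)\otimes X$. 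Once this identity is in place, the remaining manipulations amount to bookkeeping of diagonal changes of variables.
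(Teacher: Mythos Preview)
Your proof is correct and follows the same overall strategy as the paper: express $T_\Omega$ via the anisotropic polar decomposition, use the oddness of $\Omega$ to symmetrize the radial integral into a principal-value Hilbert transform along the two-sided homogeneous curve through $\omega$ (the paper's $\Gamma_\omega$ with $\mathbf e=\omega$, $\mathbf f=-\omega$), and conclude by Minkowski and the size condition \eqref{siz}.

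The one step where you differ is the uniform-in-$\omega$ bound on $\mathscr H_\omega$. The paper simply asserts that the entire proof of Theorem~\ref{main result} goes through for $\Gamma_\omega$ with the same constants, which is true but requires re-inspecting that argument. Your observation that $\Gamma_\omega(t)=D_\omega\Gamma(t)$ with $D_\omega$ diagonal (hence commuting with $\delta_t$) lets you conjugate $\mathscr H_\omega$ back to the fixed operator $\mathscr H$ of Theorem~\ref{main result} used as a black box; the Jacobians $|\det D_\omega|^{\pm1}$ cancel, so the uniformity in $\omega$ is automatic. This is a cleaner reduction, though it relies on the special diagonal form of the dilation $\delta_t$, whereas the paper's approach (rerunning the proof) would in principle extend to general two-sided homogeneous curves with $\mathbf e=-\mathbf f$ for an arbitrary one-parameter dilation group.
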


Guliev \cite{G93} has obtained the boundedness of anisotropic
singular integrals with scalar valued-kernels on UMD lattices.
Recently, Hyt\"{o}nen\cite{H07} generalized some work of Guliev to
the anisotropic singular integrals with operator-valued kernels
acting on UMD space. While their arguments require that $\Omega(x)$
should satisfy a kind of $L^\infty$-Dini condition, which is a much
more restricted condition than ours. So, Theorem \ref{app} is a
generalization of Hyt\"{o}nen and Guliev's result in this sense.

\begin{proof}
Changing the variables, we find
\begin{align}\nonumber
T_{\Omega}f(x)&={\rm p.v.}\int_{\mathbb
R^{n}}f\big(x-\delta_{\rho(y)}\delta_{\rho(y)}^{-1}y\big)\Omega(\delta_{\rho(y)}^{-1}y)\{\rho(y)\}^{-\Delta}dy\\
\label{p} &=\int_0^\infty\int_{ \mathbf
S^{n-1}}f\big(x-\delta_t\omega\big)\sum^n_{i=1}\alpha_i\omega^2_i\Omega(\omega)d\omega\frac{dt}t.
\end{align}
Note that $\Omega$ is odd, by a linear transformation, we also have
\begin{equation}\label{n}
T_{\Omega}f(x)=\int^0_{-\infty}\int_{ \mathbf
S^{n-1}}f\big(x+\delta_{(-t)}\omega\big)\sum^n_{i=1}\alpha_i\omega^2_i\Omega(\omega)d\omega\frac{dt}t.
\end{equation}
Using Fubini theorem, and adding \eqref{p} and \eqref{n} together,
we get
$$
T_{\Omega}f(x)=\frac12\int_{ \mathbf
S^{n-1}}\sum^n_{i=1}\alpha_i\omega^2_i\Omega(\omega)\big[\int_{-\infty}^0f\big(x+\delta_{(-t)}\omega\big)\frac{dt}t+\int_0^\infty
f\big(x-\delta_t\omega\big)\frac{dt}t\big]d\omega.
$$
Then, it suffices to prove that
$$
\|\int_{-\infty}^0f\big(x+\delta_{(-t)}\omega\big)\frac{dt}t+\int_0^\infty
f\big(x-\delta_t\omega\big)\frac{dt}t\|_{L^p(\mathbb R^n;\mathbf
X)}\le C_p\|f\|_{L^p(\mathbb R^n;X)},
$$
where the constant $C_p$ is independent of $\omega$.

For fixed $\omega\in \mathbf S^{n-1}$, define $\Gamma_{\omega}(t)$
as the curve in the form of \eqref{ts} associated to the dilation
$\delta_t$ with $\mathbf e=\omega$ and  $\mathbf f=-\omega$, then
the quantity inside the norm of the previous inequality is the
Hilbert transform along the curve $\Gamma_\omega(t)$. The same
arguments for the proof of Theorem \ref{main result} work also for
the curve $\Gamma_\omega(t)$,  and we obtain the desired result.
\end{proof}

In the classical case (dilation given by $\delta_tx=tx$), it is
known that the boundedness of $T_{\Omega}$ is also obtained for the
even function $\Omega$ under a stronger size condition $\Omega\in
L\log ^+L(\mathbf S^{n-1})$. The main ingredient is the existence of Riesz transforms $R_j,\;j=1,2,\cdots,n$, such that
\begin{enumerate}[(i)]
\item $-\sum^n_{j=1}R_j\circ R_j=I$,
\item the kernel of $T_{\Omega}\circ R_j$ is still homogeneous, and the associated $\Omega_j$ is an odd
function satisfying size condition \eqref{siz}.
\end{enumerate}

In the anisotropic setting, it seems very difficult to find some
replacements for Riesz transforms such that similar properties
as (i) and (ii) hold. Hence we leave it as an open problem that
whether Theorem \ref{app} is still true for the  even function
$\Omega$ under a stronger size condition.

\section{The proof of Theorem \ref{main result 2}}
The main argument for the proof is similar to that for Theorem \ref{main
result}. We first introduce a family of analytic operators. For
$z\in\mathbb C$, we define an analytic family of operators
${\mathscr H}_z$ by
$$
\widehat{\mathscr{H}_zf}(\xi,\eta)=m_z(\xi,\eta)\hat{f}(\xi,\eta),
$$
where $m_z$ are given by
$$
m_z(\xi,\eta)={\rm p.v.}\int_{\mathbb R}e^{-2\pi i[\xi
t+\eta\gamma(t)]}\big[1+\eta^2\gamma^2(t)\big]^z\frac{dt}t.
$$
Obviously, ${\mathscr H}_0$ is our original operator $\mathscr H$.

Following the idea in \cite{NW76}, it suffices to prove the following two estimates:
\begin{equation}\label{L2H}
\big\|{\mathscr H}_zf\big\|_{L^2({\mathbb R}^2;H)}\le
C_\delta\big[1+|Im(z)|\big]\big\|f\big\|_{L^2(\mathbb R^2;
H)},
\end{equation}
where $Re(z)=\frac14-\delta$ for some $\delta>0$, and
\begin{equation}\label{Lplq}
\big\|\mathscr H_z f\big\|_{L^q({\mathbb R}^2;{Y})}\le
C\big[1+|Im(z)|\big]^2\big\|f\big\|_{L^q({\mathbb R}^2;{
Y})},
\end{equation}
where $Y$ is an UMD lattice, $Re(z)<-1$, $1<q<\infty$, the
constant $C$ depends on $Re(z)$ and is independent of $Im(z)$.

Indeed, we finish the proof by analytic
interpolation argument \cite{S56}.  Let $T_zf(x)=e^{z^2}{\mathscr
H}_zf(x)$. Note that $|e^{z^2}|=e^{Re(z)^2-Im(z)^2}$, by \eqref{L2H}
there exists a constant $M_0$ which is independent of $Im(z)$ such
that
$$
\big\|T_zf\big\|_{L^2(\mathbb R^2;H)}\le C_\delta
e^{-Im(z)^2}\big[1+|Im(z)|\big]\big\|f\big\|_{L^2(\mathbb
R^2;H)}\le M_0\big\|f\big\|_{L^2(\mathbb R^2;H)}
$$
when ${\rm Re}(z)=\frac14-\delta$. Also, for UMD lattice $Y$ and $q\in(1,\infty)$, by \eqref{Lplq}
there exists a constant $M_1$ which is independent of $Im(z)$ such
that
$$
\big\|T_zf\big\|_{L^{q}(\mathbb R^2;{Y})}\le
M_1\big\|f\big\|_{L^{q}(\mathbb R^2;{Y})} \quad when \ \
{\rm Re}(z)<-1.
$$
This inequality also holds in particular with $Y=H$.

For $\frac53<p\le2$, there exist $1<q<\infty$ and
$\theta_0\in(0,\frac15)$  so that
$$
\frac1p=\frac{1-\theta_0}2+\frac{\theta_0}{q}\ \ \text{and}\ \
(\frac14-\delta)(1-\theta_0)+(-1-\varepsilon_0)\theta_0=:\sigma_1\in(0,\frac14)
$$
for some $\varepsilon_0>0$ and $0<\delta<\frac14$. By interpolation
of analytic operators, we have
$$
\big\|T_zf\big\|_{L^p(\mathbb R^2;H)}\le
C(z)\big\|f\big\|_{L^p(\mathbb R^2;H)} \quad for \ \ {\rm
Re}(z)=\sigma_1\in(0,1/4).
$$

Given an UMD lattice $X\in\mathcal{I}_{(0,1/5)}$, there exist
a $\theta\in(0,\frac15)$, a Hilbert space $H$ and another UMD
lattice $Y$, such that $L^p(\mathbb R^2;X)=[L^p(\mathbb R^2;H),L^{p}(\mathbb R^2;{Y})]_{\theta}$. For such a $\theta$ and appropriate $\sigma_1$, we choose
$\varepsilon_1>0$ such that
$(1-\theta)\sigma_1+\theta(-1-\varepsilon_1)=0$.
 Using interpolation of
analytic operators once more, we obtain
\begin{equation}\nonumber
\big\|\mathscr H f\big\|_{L^p(\mathbb R^2;X)}\le
C\big\|f\big\|_{L^p(\mathbb R^2;X)}
\end{equation}
for $\frac53<p\le 2$. The duality argument implies the result for $2\le
p<\frac52$. This completes the proof of Theorem \ref{main result 2}.

\par

The estimate \eqref{L2H} holds since Plancherel's theorem works also for Hilbert space valued functions and the original argument in \cite{NW76} can be repeated in the present situation. The novelty of the proof lies in the estimate \eqref{Lplq}, for which we need the vector-valued Fourier multiplier
theorem established recently.

 Let us firstly recall some notations. A Banach space $X$ satisfies property $(\alpha)$ if there is
 a positive
constant $C$ such that
\begin{equation}\nonumber
\mathbb{E}\mathbb{E}'\bigg|\sum^N_{k,l=1}\epsilon_k\epsilon_{l}'\alpha_{kl}x_{kl}\bigg|_{X}\le
C\mathbb{E}\mathbb{E}'\bigg|\sum^N_{k,l=1}\epsilon_k\epsilon_{l}'x_{kl}\bigg|_{X}
\end{equation}
for all $N\in\mathbb N$, all vectors $x_{kl}\in X$ and scalars
$|\alpha_{kl}|\le 1$ $(1\le k,l\le N)$, where $\epsilon_k$,
$k\in\mathbb Z$ and $\epsilon_l'$, $l\in\mathbb Z$ are two
identical independent sequences.
\begin{remark}\label{lust} The commutative $L^p$ spaces satisfy property $(\alpha)$ for all
$1\le p<\infty$. Also, this property is inherited from $X$
by $L^p(\mu,X)$ for $p\in[1,\infty)$. Every Banach space
with a local unconditional structure and finite cotype, in
particular every Banach lattice, has property $(\alpha)$.
\end{remark}
Let $m:{\mathbb R}^n\rightarrow \mathbb C$ be a bounded function, the
associated operator $T_m$ is defined on the test functions
$f\in{\mathscr S}({\mathbb R}^n)\otimes X$ by
$$
T_mf(x)=(m\hat{f})^{\vee}(x).
$$
The sufficiency part of  the following vector-valued Fourier multiplier theorem was
proved by \v{S}trkalj and Weis \cite{SW07}, while the necessity of those
conditions was obtained by Hyt\"{o}nen and Weis \cite{HW08}.
\begin{lemma}\label{multiplier theorem}
The Marcinkiewicz-Lizorkin condition $|\xi^\beta||D^\beta m(\xi)|\le
C$ for all $\beta\in\{0,1\}^n$ is sufficient for the $L^p({\mathbb R}^n;X)$-boundedness of $T_m$, $n>1$, if and only if
$X$ is an UMD space with property $(\alpha)$.
\end{lemma}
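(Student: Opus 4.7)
The plan is to prove the sufficiency (\v{S}trkalj--Weis) and necessity (Hyt\"onen--Weis) directions separately, with the common tool being the multi-parameter dyadic Littlewood--Paley decomposition adapted to the $n$-fold product structure built into the condition $\beta\in\{0,1\}^n$. For $k=(k_1,\ldots,k_n)\in\mathbb{Z}^n$, set $R_k=\prod_{j=1}^n\{2^{k_j}\le|\xi_j|<2^{k_j+1}\}$ (with a sign choice in each factor) and $\Delta_k f=(\chi_{R_k}\hat f)^\vee$. Iterating the one-parameter Burkholder UMD--Littlewood--Paley equivalence along each coordinate and invoking property $(\alpha)$ to interchange and pair up the $n$ independent Rademacher families yields the multi-parameter square-function equivalence
\begin{equation*}
\|f\|_{L^p(X)} \simeq \mathbb{E}\,\Big\|\sum_{k\in\mathbb{Z}^n}\epsilon_k\Delta_k f\Big\|_{L^p(X)}.
\end{equation*}

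\textbf{Sufficiency.} Assuming $X$ is UMD with $(\alpha)$, the goal is to show that the family $\{T_{m\chi_{R_k}}\}_{k}$ has uniformly $R$-bounded operator norms on $L^p(X)$. Iterated application of the fundamental theorem of calculus from a corner of $R_k$ gives the representation $m(\xi)=\sum_{\beta\subseteq\{1,\ldots,n\}}(-1)^{|\beta|}\int_{R_k^\beta(\xi)}\partial^\beta m(\eta)\,d\eta$ on $R_k$, and the Marcinkiewicz--Lizorkin bound $|\xi^\beta||D^\beta m|\le C$ then writes $m|_{R_k}$ as an absolutely convergent combination of tensor products of indicators of dyadic subintervals with coefficients uniformly bounded in $k$. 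Combining this expansion with the square-function equivalence above and the property-$(\alpha)$ interchange yields $\|T_m f\|_{L^p(X)}\lesssim\|f\|_{L^p(X)}$ for all $1<p<\infty$.

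\textbf{Necessity and main obstacle.} Specializing $m(\xi)=\operatorname{sgn}(\xi_j)$ (which trivially satisfies the Marcinkiewicz--Lizorkin condition) identifies $T_m$ with the $j$-th directional Hilbert transform, so its boundedness forces $X\in\mathrm{UMD}$ by Bourgain's characterization. To extract property $(\alpha)$ in dimension $n=2$ (the higher-dimensional case being analogous), test the theorem against a smooth multiplier approximating $m_\alpha(\xi)=\sum_{k,l}\alpha_{kl}\chi_{I_k}(\xi_1)\chi_{J_l}(\xi_2)$ (with disjoint dyadic intervals $I_k,J_l$ and $|\alpha_{kl}|\le 1$) applied to a function $f(x)=\sum_{k,l}\epsilon_k\epsilon_l' x_{kl}\varphi_{k,l}(x)$ whose Fourier support sits in $I_k\times J_l$; the resulting multiplier bound reads precisely as the $(\alpha)$-inequality after randomization. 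The principal obstacle throughout is the transfer of the $n$-fold product structure from the Fourier side to the Bochner norm: the pointwise Marcinkiewicz--Lizorkin condition must be upgraded to $R$-boundedness of the rectangle projections without losing control of the $n$-parameter Rademacher averages, and it is exactly property $(\alpha)$ that provides the clean interchange of $n$ independent Rademacher families needed to execute the dimension induction, which also explains why $(\alpha)$ cannot be dispensed with.
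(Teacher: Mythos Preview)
The paper does not prove this lemma: it is recorded as a known result, with sufficiency attributed to \v{S}trkalj and Weis \cite{SW07} and necessity to Hyt\"onen and Weis \cite{HW08}, and is then invoked as a black box in the verification of estimate \eqref{Lplq}. Your sketch is therefore not in competition with any argument in the paper itself, but it does give a faithful outline of how the cited proofs run: the sufficiency direction is indeed obtained via a multi-parameter dyadic Littlewood--Paley decomposition together with property $(\alpha)$ to control the $n$-fold Rademacher product (this is the core of \cite{SW07}), and necessity is obtained by testing against coordinate Hilbert transforms (for UMD) and multi-parameter sign-type multipliers (for $(\alpha)$), as in \cite{HW08}.

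At the level of a summary your sketch is correct. The one place that would require more care in a self-contained write-up is the extraction of property $(\alpha)$: the passage from a uniform-in-$\alpha$ bound on $T_{m_\alpha}$ to the full $(\alpha)$-inequality with arbitrary vectors $x_{kl}\in X$ needs the test functions $\varphi_{k,l}$ to be chosen so that the $L^p(X)$-norm of $\sum_{k,l}\epsilon_k\epsilon_l' x_{kl}\varphi_{k,l}$ actually decouples into the randomized double sum $\mathbb{E}\mathbb{E}'\|\sum_{k,l}\epsilon_k\epsilon_l' x_{kl}\|_X$. This is not automatic and is where the real work in \cite{HW08} lies; your phrase ``reads precisely as the $(\alpha)$-inequality after randomization'' hides a nontrivial transference step.
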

In view of Lemma \ref{multiplier theorem} and Remark \ref{lust}, to prove the estimate  \eqref{Lplq}, it suffices to show that the following functions
$$
m_z(\xi,\eta),\ \xi\frac{\partial m_z}{\partial\xi}(\xi,\eta),\
\eta\frac{\partial m_z}{\partial\eta}(\xi,\eta),\
\xi\eta\frac{\partial^2 m_z}{\partial\xi\partial\eta}(\xi,\eta)
$$
are uniformly bounded on $\mathbb R^2$ for $Re(z)<-1$.
\par
The uniform boundedness of $m_z(\xi,\eta)$ is trivial, it can be
showed by minor modification of the proof of \eqref{L2H}.
Without repetition, we omit the proof. The following
estimates are essentially proved in \cite{NW76}, we include them here for the sake of completeness.

{\bf The boundedness of $\xi\frac{\partial
m_z}{\partial\xi}(\xi,\eta)$.}
Integration by part implies that
\begin{eqnarray*}
\xi\frac{\partial m_z}{\partial\xi}(\xi,\eta)&=&-2\pi i\int_{\mathbb
R}e^{-2\pi i[\xi
t+\eta\gamma(t)]}\xi\big[1+\eta^2\gamma^2(t)\big]^zdt\\
&=&\int_{\mathbb R}\frac{d}{dt}(e^{-2\pi i\xi
t})e^{-2\pi i\eta\gamma(t)}\big[1+\eta^2\gamma^2(t)\big]^zdt\\
&=&e^{-2\pi i[\xi
t+\eta\gamma(t)]}\big[1+\eta^2\gamma^2(t)\big]^z\bigg|^{\infty}_{-\infty}\\
&+&2\pi
i\eta\int_{\mathbb R}e^{-2\pi i[\xi
t+\eta\gamma(t)]}\gamma'(t)\big[1+\eta^2\gamma^2(t)\big]^zdt\\
&-&2z\eta^2\int_{\mathbb R}e^{-2\pi i[\xi
t+\eta\gamma(t)]}\big[1+\eta^2\gamma^2(t)\big]^{z-1}\gamma(t)\gamma'(t)dt.
\end{eqnarray*}
\par
Note that $Re(z)<-1$, for $t\in{\mathbb R}$, we have $
\big|\big[1+\eta^2\gamma^2(t)\big]^z\big|=\big[1+\eta^2\gamma^2(t)\big]^{Re(z)}\le1$.
The boundary terms are bounded by $1$.
\par
For $Re(z)<-1$, making the change of variables $u=|\eta|\gamma(t)$,
we obtain
\begin{eqnarray*}
\bigg|\eta\int_{\mathbb R}e^{-2\pi i[\xi
t+\eta\gamma(t)]}\gamma'(t)\big[1+\eta^2\gamma^2(t)\big]^zdt\bigg|&\le&\int_{\mathbb
R}\gamma'(t)|\eta|\big[1+\eta^2\gamma^2(t)\big]^{Re(z)}dt\\
&\le&\int_{\mathbb R}\big(1+u^2\big)^{Re(z)}du\le\pi.
\end{eqnarray*}
\par
In a similar way, the second integrated term can be dominated by
\begin{eqnarray*}
&&\bigg|z\eta^2\int_{\mathbb R}e^{-2\pi i[\xi
t+\eta\gamma(t)]}\big[1+\eta^2\gamma^2(t)\big]^{z-1}\gamma(t)\gamma'(t)dt\bigg|\\
&\le&2|z|\int_{0}^\infty\big[1+\eta^2\gamma^2(t)\big]^{Re(z)-1}\eta^2\gamma(t)\gamma'(t)dt\\
&\le&|z|\int_{0}^\infty(1+u)^{Re(z)-1}du\le1+|Im(z)|.
\end{eqnarray*}
Therefore, for $Re(z)<-1$,
$$
\big|\xi\frac{\partial m_z}{\partial\xi}(\xi,\eta)\big|\le
C\big[1+|Im(z)|\big].
$$

{\bf The boundedness of $\eta\frac{\partial m_z}{\partial\eta}(\xi,\eta)$.}
Integrating by parts, we obtain
\begin{eqnarray*}
\eta\frac{\partial m_z}{\partial\eta}(\xi,\eta)&=&-2\pi
i\ {\rm p.v.}\int_{\mathbb R}e^{-2\pi i[\xi
t+\eta\gamma(t)]}\eta\gamma(t)\big[1+\eta^2\gamma^2(t)\big]^z\frac{dt}t\\
&+&2z\ {\rm p.v.}\int_{\mathbb R}e^{-2\pi i[\xi
t+\eta\gamma(t)]}\eta^2\gamma^2(t)\big[1+\eta^2\gamma^2(t)\big]^{z-1}\frac{dt}t.
\end{eqnarray*}
\par
To estimate above two integrals, we follow the argument used in the proof of \eqref{L2H}. For the first integral, for any
$\varepsilon>0$, it suffices to bound the following two parts
$$
\int_{\varepsilon<|t|<
t_0}|\eta||\gamma(t)|\big[1+\eta^2\gamma^2(t)\big]^{Re(z)}\frac{dt}{|t|}\
\ \text{and}\ \ \int_{|t|\ge
t_0}|\eta||\gamma(t)|\big[1+\eta^2\gamma^2(t)\big]^{Re(z)}\frac{dt}{|t|}.
$$
Recall that $t_0>0$ was chosen so that $|\eta|\gamma(t_0)=1$, and
$\gamma(t)\le t\gamma'(t)$ because of the convexity. Thus,
\begin{eqnarray*}
\int_{\varepsilon<|t|<
t_0}|\eta||\gamma(t)|\big[1+\eta^2\gamma^2(t)\big]^{Re(z)}\frac{dt}{|t|}\le
2|\eta|\int_0^{t_0}\frac{\gamma(t)}{t}dt\le2|\eta|\int_0^{t_0}\gamma'(t)dt\le
2.
\end{eqnarray*}
For $Re(z)<-1$, an elementary calculation implies that
$$
\int_{|t|\ge
t_0}|\eta||\gamma(t)|\big[1+\eta^2\gamma^2(t)\big]^{Re(z)}\frac{dt}{|t|}\le2|\eta|^{2Re(z)+1}\int_{
t_0}^\infty\gamma^{2Re(z)}(t)\frac{\gamma(t)}{t}dt\le2.
$$
\par
Similarly, the second integral can be controlled by
\begin{eqnarray*}
&&\bigg|z\int_{\mathbb R}e^{-2\pi i[\xi
t+\eta\gamma(t)]}\eta^2\gamma^2(t)\big[1+\eta^2\gamma^2(t)\big]^{z-1}\frac{dt}t\bigg|\\
&\le& 2|z|\int_{0}^{t_0}\eta^2\gamma^2(t)\frac{dt}{t}
+2|z|\int_{t_0}^\infty\eta^2\gamma^2(t)\big[\eta^2\gamma^2(t)\big]^{Re(z)-1}\frac{dt}{t}\\
&\le&2|z|\eta^2\int_{0}^{t_0}\gamma(t)\gamma'(t)dt+2|z|\eta^{2Re(z)}\int_{t_0}^\infty\gamma^{2Re(z)-1}(t)\gamma'(t)dt\\
&\le&|z|+\frac{|z|}{|Re(z)|}\le 2|Re(z)|\big[1+|Im(z)|\big].
\end{eqnarray*}
Therefore, for $Re(z)<-1$,
$$
\big|\xi\frac{\partial m_z}{\partial\xi}(\xi,\eta)\big|\le
C\big[1+|Im(z)|\big].
$$

{\bf The boundedness of $\xi\eta\frac{\partial^2 m_z}{\partial\xi\partial\eta}(\xi,\eta)$.}
To deal with $\xi\eta\frac{\partial^2
m_z}{\partial\xi\partial\eta}(\xi,\eta)$, we rewrite it as
\begin{eqnarray*}
\xi\eta\frac{\partial^2
m_z}{\partial\xi\partial\eta}(\xi,\eta)&=&-4\pi^2\xi\eta
\int_{\mathbb R}e^{-2\pi i[\xi
t+\eta\gamma(t)]}\gamma(t)\big[1+\eta^2\gamma^2(t)\big]^{z}dt\\
&-&4\pi iz\xi\eta \int_{\mathbb R}e^{-2\pi i[\xi
t+\eta\gamma(t)]}\big[1+\eta^2\gamma^2(t)\big]^{z-1}\eta\gamma^2(t)dt.
\end{eqnarray*}
For the first term, integrating by parts, we obtain
\begin{eqnarray*}
&&4\pi^2\xi\eta \int_{\mathbb R}e^{-2\pi i[\xi
t+\eta\gamma(t)]}\gamma(t)\big[1+\eta^2\gamma^2(t)\big]^{z}dt\\
&=&2\pi
i\int_{\mathbb R}\frac d{dt}\big(e^{-2\pi i\xi t}\big)e^{-2\pi
i\eta\gamma(t)}[\eta \gamma(t)]\big[1+\eta^2\gamma^2(t)\big]^{z}dt\\
&=&2\pi ie^{-2\pi i[\xi t+\eta\gamma(t)]}[\eta
\gamma(t)]\big[1+\eta^2\gamma^2(t)\big]^{z}\bigg|_{-\infty}^\infty\\
&-&4\pi^2\int_{\mathbb
R}e^{-2\pi i[\xi t+\eta\gamma(t)]}\eta\gamma'(t)[\eta
\gamma(t)]\big[1+\eta^2\gamma^2(t)\big]^{z}dt\\
&-&2\pi i\int_{\mathbb R}e^{-2\pi i[\xi
t+\eta\gamma(t)]}\eta\gamma'(t)\big[1+\eta^2\gamma^2(t)\big]^{z}dt\\
&-&4\pi
iz\int_{\mathbb R}e^{-2\pi i[\xi
t+\eta\gamma(t)]}\big[1+\eta^2\gamma^2(t)\big]^{z-1}\eta^3\gamma^2(t)\gamma'(t)dt.
\end{eqnarray*}
\par
Obviously, for $Re(z)<-1$, $t\in\mathbb R$, $\big|2\pi ie^{-2\pi
i[\xi t+\eta\gamma(t)]}[\eta
\gamma(t)]\big[1+\eta^2\gamma^2(t)\big]^{z}\big|\le2\pi|\eta||\gamma(t)|\big[1+\eta^2\gamma^2(t)\big]^{Re(z)}\le
2\pi$. So, the boundary terms are bounded by $2\pi$.
\par
For the first integrated term, making the change of variables
$u=\eta^2\gamma^2(t)$, we have
\begin{eqnarray*}
&&\bigg|\int_{\mathbb R}e^{-2\pi i[\xi
t+\eta\gamma(t)]}\eta\gamma'(t)[\eta
\gamma(t)]\big[1+\eta^2\gamma^2(t)\big]^{z}dt\bigg|\\
&\le&2\int_{0}^\infty\big[1+\eta^2\gamma^2(t)\big]^{Re(z)}\eta^2
\gamma(t)\gamma'(t)dt\\
&\le&\int_0^\infty\big(1+u\big)^{Re(z)}du\le \frac1{|Re(z)+1|}.
\end{eqnarray*}
\par
The second integrated terms can be treated in the same way, let
$u=\eta\gamma(t)$,
$$
\bigg|\int_{\mathbb R}e^{-2\pi i[\xi
t+\eta\gamma(t)]}\eta\gamma'(t)\big[1+\eta^2\gamma^2(t)\big]^{z}dt\bigg|
\le\int_{\mathbb R}(1+u^2)^{Re(z)}du\le \pi.
$$
\par
Similarly, a trivial calculation shows that
\begin{eqnarray*}
&&\bigg|z\int_{\mathbb R}e^{-2\pi i[\xi
t+\eta\gamma(t)]}\big[1+\eta^2\gamma^2(t)\big]^{z-1}\eta^3\gamma^2(t)\gamma'(t)dt\bigg|\\
&\le&2|z|\int_0^\infty u^2\big(1+u^2\big)^{Re(z)-1}du\le\pi|z|.
\end{eqnarray*}
The second term can be handled similarly. Integrating by parts, we
decompose it as
\begin{eqnarray*}
&&4\pi iz\xi\eta \int_{\mathbb R}e^{-2\pi i[\xi
t+\eta\gamma(t)]}\big[1+\eta^2\gamma^2(t)\big]^{z-1}\eta\gamma^2(t)dt\\
&=&2\pi iz\int_{\mathbb R}\frac d{dt}\big(e^{-2\pi i\xi
t}\big)e^{-2\pi
i\eta\gamma(t)}\eta^2 \gamma^2(t)\big[1+\eta^2\gamma^2(t)\big]^{z-1}dt\\
&=&2\pi ize^{-2\pi i[\xi t+\eta\gamma(t)]}\eta^2
\gamma^2(t)\big[1+\eta^2\gamma^2(t)\big]^{z-1}\bigg|_{-\infty}^\infty\\
\end{eqnarray*}
\begin{eqnarray*}
&-&4\pi^2z\int_{\mathbb R}e^{-2\pi i[\xi
t+\eta\gamma(t)]}\eta\gamma'(t)\eta^2
\gamma^2(t)\big[1+\eta^2\gamma^2(t)\big]^{z-1}dt\\
&-&4\pi iz\int_{\mathbb R}e^{-2\pi i[\xi
t+\eta\gamma(t)]}\eta^2\gamma(t)\gamma'(t)\big[1+\eta^2\gamma^2(t)\big]^{z-1}dt\\
&-&4\pi iz(z-1)\int_{\mathbb R}e^{-2\pi i[\xi
t+\eta\gamma(t)]}\eta^2
\gamma^2(t)\big[1+\eta^2\gamma^2(t)\big]^{z-2}\eta^2\gamma(t)\gamma'(t)dt.
\end{eqnarray*}
\par
Obviously, for $Re(z)<-1$, $t\in\mathbb R$, $ \big|ze^{-2\pi i[\xi
t+\eta\gamma(t)]}\eta^2
\gamma^2(t)\big[1+\eta^2\gamma^2(t)\big]^{z-1}\big|\le |z|$. The
boundary terms are dominated by $4\pi|z|$.
\par
For the first integrated term, by making the change of variables
$u=\eta\gamma(t)$, we have the estimate
\begin{align*}
\bigg|z\int_{\mathbb R}e^{-2\pi i[\xi
t+\eta\gamma(t)]}\eta\gamma'(t)\eta^2
\gamma^2(t)\big[1+\eta^2\gamma^2(t)\big]^{z-1}dt\bigg|
&\le|z|\int_{\mathbb R}u^2(1+u^2)^{Re(z)-1}dt\\
 &\le\pi|z|.
\end{align*}
\par
To estimate the second integrated terms, we make the transformation
$u=\eta^2\gamma^2(t)$ and get
\begin{align*}
\bigg|z\int_{\mathbb R}e^{-2\pi i[\xi
t+\eta\gamma(t)]}\eta^2\gamma(t)\gamma'(t)\big[1+\eta^2\gamma^2(t)\big]^{z-1}dt\bigg|
&\le|z|\int_0^\infty(1+u)^{Re(z)-1}du\\
&\le \frac{|z|}{|Re(z)|}.
\end{align*}
\par
Similarly, the third integrated terms can be treated as
\begin{eqnarray*}
&&\bigg|z(z-1)\int_{\mathbb R}e^{-2\pi i[\xi
t+\eta\gamma(t)]}\big[1+\eta^2\gamma^2(t)\big]^{z-2}\eta^4\gamma^3(t)\gamma'(t)dt\bigg|\\
&\le&|z(z-1)|\int_0^\infty
(1+u)^{Re(z)-1}du\le\frac{|z(z-1)|}{|Re(z)|}.
\end{eqnarray*}
Note that for $Re(z)<-1$, we have the following elementary estimates
$$
|z|\le |Re(z)|\big[1+|Im(z)|\big]\ \ \text{and}\ \ |z-1|\le
|Re(z)-1|\big[1+|Im(z)|\big].
$$
Finally, combining the above eight estimates, we obtain
$$
\big|\xi\eta\frac{\partial^2
m_z}{\partial\xi\partial\eta}(\xi,\eta)\big|\le
C\big[1+Im(z)\big]^2.
$$
This completes the proof of Theorem \ref{main result 2}.
\bigskip

{\bf Acknowledgement.}\ The first author is supported in part by MINECO: ICMAT Severo Ochoa project SEV-2011-0087 and ERC Grant StG-256997-CZOSQP (EU); The second author is supported in part by NSFC 11371057 and 11471033. The authors would like to thank the referee for many valuable and useful comments and suggestions which have improved this paper.

\bibliographystyle{amsplain}

\end{document}